\theoremstyle{plain}
\newtheorem{theorem}{Theorem}
\newtheorem{definition}{Definition}
\newtheorem{lemma}{Lemma}
\newtheorem{assumption}{Assumption}
\newtheorem{remark}{Remark}
\newtheorem{corollary}{Corollary}
\def\vnu{{\bm{\nu}}}
\def\a{{\bf{a}}}
\def\b{{\bf{b}}}
\def\d{{\bf{d}}}
\def\g{{\bf{g}}}
\def\p{{\bf{p}}}
\def\q{{\bf{q}}}
\def\r{{\bf{r}}}
\def\u{{\bf{u}}}
\def\v{{\bf{v}}}
\def\w{{\bf{w}}}
\def\x{{\bf{x}}}
\def\y{{\bf{y}}}
\def\z{{\bf{z}}}
\def\cD{{\mathcal{D}}}
\def\cG{{\mathcal{G}}}
\def\cO{{\mathcal{O}}}
\def\cP{{\mathcal{P}}}
\def\cQ{{\mathcal{Q}}}
\def\cS{{\mathcal{S}}}
\def\cX{{\mathcal{X}}}
\def\cY{{\mathcal{Y}}}
\def\BE{{\mathbb{E}}}
\def\BR{{\mathbb{R}}}
\def\X {{\bf X}}
\DeclareMathOperator*{\argmax}{arg\,max}
\DeclareMathOperator*{\argmin}{arg\,min}
\title{Efficient Projection-Free Algorithms for Saddle Point Problems}
\author{
	Cheng Chen$^1$ \qquad Luo Luo$^2$\thanks{Corresponding author} \qquad Weinan Zhang$^1$ \qquad Yong Yu$^1$ \\ 
	$^1$Shanghai Jiao Tong University \\
	$^2$The Hong Kong University of Science and Technology  \\
	{\small \texttt{jack\_chen1990@sjtu.edu.cn}\hskip1.8em
		\texttt{luoluo@ust.hk}\hskip1.8em
		\texttt{\{wnzhang,yyu\}@apex.sjtu.edu.cn}}
}
\begin{document}
	
	\maketitle
	
	\begin{abstract}
		The Frank-Wolfe algorithm is a classic method for constrained optimization problems. It has recently been popular in many machine learning applications because its projection-free property leads to more efficient iterations. In this paper, we study projection-free algorithms for   convex-strongly-concave saddle point problems with complicated constraints. Our method combines Conditional Gradient Sliding  with Mirror-Prox and shows that it only requires $\tilde{\cO}(1/\sqrt{\epsilon})$ gradient evaluations and $\tilde{\cO}(1/\epsilon^2)$ linear optimizations in the batch setting. We also extend our method to the stochastic setting and propose first stochastic projection-free algorithms for saddle point problems. Experimental results demonstrate the effectiveness of our algorithms and verify our theoretical guarantees.
	\end{abstract}
	
	\section{Introduction}
	In this paper, we study the following saddle point problems:
	\begin{equation*} \label{eq:problem}
	\min_{\x\in\cX}\max_{\y\in\cY} f(\x,\y)    
	\end{equation*}
	where the objective function $f(\x,\y)$ is convex-concave and $L$-smooth;  $\cX$ and $\cY$ are convex and compact sets. Besides this general form, we also consider the stochastic minimax problem:
	\begin{align}\label{eq:stochastic}
	\min_{\x\in\cX}\max_{\y\in\cY} f(\x,\y)\triangleq\BE_\xi[F(\x,\y;\xi)],
	\end{align}
	where $\xi\in\Xi$ is a random variable. One popular specific setting of (\ref{eq:stochastic}) is the finite-sum case where $\xi$ is sampled from a finite set $\Xi=\{\xi_i\}_{i=1}^n$. Denoting $F_i(\x,\y)\triangleq F(\x,\y;\xi_i)$, we can write the objective function as
	\begin{align}\label{eq:finitesum}
	f(\x,\y)\triangleq\frac{1}{n}\sum_{i=1}^n F_i(\x,\y).
	\end{align}
	
	We are interested in the cases where the feasible set is complicated such that projecting onto $\cX\times\cY$ is rather expensive or even intractable. One example of such case is the nuclear norm ball constraint, which is widely used in machine learning applications such as multiclass classification~\cite{dudik2012lifted}, matrix completion~\cite{candes2009exact,jaggi2010simple,lacoste2013affine}, 
	factorization machine~\cite{lin2018online},
	polynomial neural nets~\cite{livni2014computational} 
	and two-player games whose strategy space contains a large number of constraints~\cite{ahmadinejad2019duels}.
	
	The Frank-Wolfe (FW) algorithm~\cite{frank1956algorithm} (a.k.a. conditional gradient method) is initially proposed for constrained convex optimization. It has recently become popular in the machine learning community because of its projection-free property~\cite{jaggi2013revisiting}. The Frank-Wolfe algorithm calls a linear optimization (LO) oracle at each iteration, which is usually much faster than projection for complicated feasible sets. Recently, FW-style algorithms for convex and nonconvex minimization problems has been widely studied~\cite{lacoste2013affine,lan2016conditional,hazan2016variance,hazan2012projection,qu2018non,reddi2016stochastic,yurtsever2019conditional,shen2019complexities,xie2020efficient,zhang2020one,hassani2019stochastic}. However, the only known projection-free algorithms for minimax optimization are for very special cases (e.g. the saddle point belongs to the interior of the feasible set~\cite{gidel2017frank}).
	
	In this paper, we propose a projection-free algorithm, which we refer to as Mirror-Prox Conditional Gradient Sliding (MPCGS), for convex-strongly-concave saddle point problems. 
	Our method leverages the idea from some projection-type methods~\cite{nemirovski2004prox,thekumparampil2019efficient}, which is based on proximal point iterations. By combining the idea of Mirror-Prox~\cite{thekumparampil2019efficient} with the conditional gradient sliding (CGS)~\cite{lan2016conditional}, MPCGS only requires at most $\tilde{\cO}(1/\sqrt{\epsilon})$ exact gradient evaluations and $\tilde{\cO}(1/\epsilon^2)$ linear optimizations to guarantee $\mathcal O(\epsilon)$ suboptimality error in expectation. 
	We also extend our framework to the stochastic setting and propose Mirror-Prox Stochastic Conditional Gradient Sliding (MPSCGS), which requires to compute at most $\tilde{\cO}(1/\epsilon^2)$ stochastic gradients and call the LO oracle for at most $\tilde{\cO}(1/\epsilon^2)$ times. To the best of our knowledge, MPSCGS is the first stochastic projection-free algorithm for convex-strongly-concave saddle point problems. 
	We also conduct experiments on several real-world data sets for robust optimization problem to validate our theoretical analysis. The empirical results show that the proposed methods outperform previous projection-free and projection-based methods when the feasible set is complicated.
	
	\paragraph{Related Works}
	Most existing works on constrained minimax optimization solve the problem with projection. We only provide some representative literature. For the batch setting, the classical extragradient method~\cite{korpelevich1976extragradient} considered a more general variational inequality (VI). \citet{nemirovski2004prox} proposed Mirror-Prox method which achieves a convergence rate of $\cO(1/\epsilon)$ for solving VI. Recently, \citet{thekumparampil2019efficient} improved the convergence rate to $\cO(1/\sqrt{\epsilon})$ when the objection function is strongly-convex-concave. For the stochastic setting, \citet{chavdarova2019reducing,palaniappan2016stochastic} adopted the variance reduction methods to obtain linear convergence rate for strongly-convex-strongly-concave objection functions.
	
	The projection-free methods for saddle point problems are very few. ~\citet{hammond1984solving} found that FW algorithm with a step size $\cO(1/k)$ converges for VI when the feasible set is strongly convex. Recently, ~\citet{gidel2017frank} proposed SP-FW algorithm for strongly-convex-strongly-concave saddle point problem, which achieves a linear convergence rate under the condition that the saddle point belongs to the interior of the feasible set and the condition number is small enough. 
	They also provided an away-step Frank-Wolfe variant~\cite{lacoste2013affine}, called SP-AFW, to address the polytope constraints. However, SP-AFW has to store history information and perform extra operations in each iteration. ~\citet{roy2019online} extended SP-FW to the zeroth-order setting and studied a gradient-free projection-free algorithm which has the theoretical guarantee under the same assumptions on the objective function as SP-FW. \citet{he2015semi} proposed a projection-free algorithm for non-smooth composite saddle point problem. Their method requires to call a composite LO oracle, which is not suitable for general case.
	
	Some recent works focus on hybrid algorithms which combine projection-based and projection-free methods. 
	For example, \citet{juditsky2016solving,cox2017decomposition} transformed a VI with complicated constraints to a ``dual'' VI which is projection-friendly. \citet{lan2013complexity,nouiehed2019solving} solved the saddle point problem by running projection-free methods on $\cX$ and performing projection on $\cY$. In contrast, our methods are purely projection-free.
	
	\paragraph{Paper Organization}
	In Section \ref{sec:pre}, we provide preliminaries and relevant backgrounds. We present our results for the batch setting and stochastic setting in Section \ref{sec:mpcgs} and Section \ref{sec:mpscgs} respectively.
	We give empirical results for our algorithm in Section \ref{sec:exp}, followed by a conclusion in Section \ref{sec:con}.

	\section{Preliminaries and Backgrounds} \label{sec:pre}
	In this section, we first present some notation and assumptions used in this paper. Then we introduce oracle models which are necessary to our methods, followed by an example of application. After that we provide some properties of saddle point problems.
	Finally, we introduce CGS and its variants, which are used in our algorithms.

	\subsection{Notation and Assumptions}
	
	Given a differentiable function $f(\x,\y)$, we use $\nabla_\x f(\x,\y)$ (or $\nabla_\y f(\x,\y)$) to denote the partial gradient of $f$ with respect to $\x$ (or $\y$) and define $f_\cS(\x,\y)=\frac{1}{|\cS|}\sum_{\xi\in\cS} F(\x,\y;\xi)$. We use the notation $\tilde{\cO}$ to hide logarithmic factors in the complexity and denote $[n]=\{1,2,\dots,n\}$.
	
	We impose the following assumptions for our method.
	\begin{assumption} \label{ass1}
		We assume the saddle point problem (\ref{eq:problem}) satisfies: 
		\begin{itemize}[leftmargin=0.6cm]
			\item $f(\x,\y)$ is $L$-smooth, i.e., for every $(\x_1,\y_1), (\x_2,\y_2)\in\cX\times\cY$, it holds that
			\begin{align*}
			\|\nabla f(\x_1,\y_1)-\nabla f(\x_2,\y_2)\|^2\leq L^2\left(\|\x_1-\x_2\|^2+\|\y_1-\y_2\|^2\right).
			\end{align*}
			\item $f(\cdot,\y)$ is convex for every $\y\in\cY$, i.e., for any $\x_1$, $\x_2$ and $\y$, it holds that
			\begin{align*}
			f(\x_1,\y)-f(\x_2,\y)\geq \nabla_\x f(\x_2,\y)^\top (\x_1-\x_2).
			\end{align*}
			\item  $f(\x,\cdot)$ is $\mu$-strongly concave for every $\x\in\cX$, i.e., for any $\x$, $\y_1$ and $\y_2$, it holds that 
			\begin{align*}
			f(\x,\y_1)-f(\x,\y_2)\leq \nabla_\y f(\x,\y_2)^\top (\y_1-\y_2) - \frac{\mu}{2}\|\y_1-\y_2\|^2.
			\end{align*}
			\item $\cX$ and $\cY$ are convex compact sets with diameter $D_\cX$ and $D_\cY$ respectively.
		\end{itemize}
	\end{assumption}
	We use $\kappa=L/\mu$ to denote the condition number.
	
	\begin{assumption} \label{ass2}
		In the stochastic setting, we make the following additional assumptions:
		\begin{itemize}[leftmargin=0.6cm]
			\item $\BE[\nabla F(\x,\y;\xi)]=\nabla f(\x,\y)$ for every $(\x,\y)\in\cX\times\cY$ and $\xi\in\Xi$.
			\item $\BE\|\nabla F(\x,\y;\xi)-\nabla f(\x,\y)\|^2\leq \sigma^2$ for every  $(\x,\y)\in\cX\times\cY$, $\xi\in\Xi$ and constant $\sigma>0$.
			\item $f(\x,\y)$ is $L$-average smooth, i.e., for every $(\x_1,\y_1), (\x_2,\y_2)\in\cX\times\cY$ and $\xi\in\Xi$, it holds that
			\begin{align*}
			\BE\|\nabla F(\x_1,\y_1,\xi)-\nabla F(\x_2,\y_2,\xi)\|^2\leq L^2(\|\x_1-\x_2\|^2+\|\y_1-\y_2\|^2).
			\end{align*}
		\end{itemize}
	\end{assumption}
	
	
	In the convex-concave setting, for any $(\hat{\x},\hat{\y})\in\cX\times\cY$, we have the following inequality:
	\begin{equation*}
	\min_{\x\in\cX} f(\x,\hat{\y})\leq f(\hat{\x},\hat{\y}) \leq \max_{\y\in\cY} f(\hat{\x},\y).
	\end{equation*}
	Furthermore, Problem (\ref{eq:problem}) has at least one saddle point solution $(\x^*,\y^*)\in\cX\times\cY$ which satisfies:
	\begin{equation*}
	\min_{\x\in\cX} f(\x,\y^*)= f(\x^*,\y^*) = \max_{\y\in\cY} f(\x^*,\y).
	\end{equation*}
	We measure the suboptimality error by the primal-dual gap: $\max_{\y\in\cY} f(\hat{\x},\y)-\min_{\x\in\cX} f(\x,\hat{\y})$, which is widely used in saddle point problems. We further define $\epsilon$-saddle point as follows:
	\begin{definition}
		A point $(\hat{\x},\hat{\y})\in\cX\times\cY$ is an $\epsilon$-saddle point of a convex-concave function $f$ if:
		\begin{align}\label{dfn:saddle}
		max_{\y\in\cY}f(\hat{\x},\y)-\min_{\x\in\cX} f(\x,\hat{\y})\leq \epsilon. 
		\end{align}
	\end{definition}
	Notice that \citeauthor{gidel2017frank} \cite{gidel2017frank} adopted a different criterion: $w(\hat{\x},\hat{\y})=f(\hat{\x},\y^*)-f(\x^*,\hat{\y})$. It is obvious that the left-hand side of (\ref{dfn:saddle}) is an upper bound of $w(\hat{\x},\hat{\y})$.

	\subsection{Oracle models}
	In this paper, we consider the following oracles for different settings:
	\begin{itemize}[leftmargin=0.6cm]
		\item First Order Oracle (FO): Given $(\x,\y)\in\cX\times\cY$, the FO returns $f(\x,\y)$ and $\nabla f(\x,\y)$.
		\item Stochastic First Order Oracle (SFO): For a function $\BE_\xi[F(\x,\y;\xi)]$ where $\xi\sim P$, SFO returns $F(\x,\y;\xi')$ and $\nabla F(\x,\y;\xi')$ where $\xi'$ is a sample drawn from $P$.
		\item Incremental First Order Oracle (IFO): In the finite-sum setting, IFO takes a sample $i\in[n]$ and returns $F_i(\x,\y)$ and $\nabla F_i(\x,\y)$.
		\item Linear Optimization Oracle (LO): Given a vector $\g\in\BR^d$ and a convex and compact set $\Omega\subseteq\BR^d$, the LO returns a solution of the problem
		$\min_{\v\in\Omega} \langle\v, \g\rangle$.
	\end{itemize}
	
	\subsection{Example Application: Robust Optimization for Multiclass Classification} \label{sec:app}
	
	We consider the multiclass classification problem with $h$ classes. Suppose the training set is $\cD=\{(\a_i,b_i)\}_{i=1}^n$, where $\a_i\in\BR^d$ is the feature vector of the $i$-th sample and $b_i\in[h]$ is the corresponding label. The goal is to find an accurate linear predictor with parameter $\X = [\x_1^\top, \x_2^\top, \cdots, \x_h^\top]\in\BR^{h\times d}$ that predicts $b=\argmax_{j\in[h]} \x_j^\top\a$ for any input feature vector $\a\in\BR^d$. 
	
	The robust optimization model~\cite{namkoong2017variance} with multivariate logistic loss~\cite{dudik2012lifted,zhang2012accelerated} under nuclear norm ball constraint can be formulated as the following convex-concave minimax optimization:
	\begin{align} \label{eq:multi}
	\min_{\X\in\cX}\max_{\y\in\cY} f(\X,\y) \triangleq \frac{1}{n}\sum_{i=1}^n y_i\log\left(1+\sum_{j\neq y_i}\left(\x_j^\top\a_i-\x_{y_i}^\top\a_i\right)\right)-\frac{\lambda}{2}\|n\y-{\bf 1}_n\|_2^2,
	\end{align}
	where 
	$\cX=\left\{\X\in\BR^{h\times d}: \|\X\|_* \leq \tau \right\}$ and $\cY=\left\{\y\in\BR^n : y_i\geq 0, \sum_{i=1}^n y_i = 1 \right\}$.
	It is obvious that the objective function (\ref{eq:multi}) is convex-strongly-concave, which satisfies our assumptions. In this case, projecting onto $\cX$ requires to perform full SVD, which takes $\cO(hd\min\{h,d\})$ time. On the other hand, the linear optimization on $\cX$ only needs to find the top singular vector, whose cost is linear to the number of non-zero entries in the gradient matrix.
	
	\subsection{Conditional Gradient Sliding}
	Conditional Gradient Sliding (CGS)~\cite{lan2016conditional} is a projection-free algorithm for convex minimization. It leverages Nesterov's accelerate gradient descent~\cite{nesterov27method} to speed-up Frank-Wolfe algorithms. For strongly-convex objective function, CGS only requires $\cO(\sqrt{\kappa}\log(1/\epsilon))$ FO calls and $\cO(1/\epsilon)$ LO calls to find an $\epsilon$-suboptimal solution. We present the details of CGS in Algorithm \ref{alg:cgs}. Notice that the $k$-th iteration of CGS considers the following sub-problem 
	\begin{align*}
	\min_{\u\in\Omega}~\langle\nabla h(\w_k),\u \rangle+\frac{\beta_k}{2}\|\u-\u_{k-1}\|^2,
	\end{align*}
	which can be efficiently solved by the conditional gradient method in Algorithm \ref{alg:cndg}. \citeauthor{lan2016conditional} \cite{lan2016conditional} also extended CGS to stochastic setting and proposed stochastic conditional gradient sliding (SCGS). Later, \citet{hazan2016variance} proposed STOchastic
	variance-Reduced Conditional gradient sliding (STORC) for finite-sum setting whose complexities of IFO and LO are  $\cO((n+\kappa^2)\log(1/\epsilon))$ and $\cO(1/\epsilon)$ respectively.
	
	
	\begin{algorithm}[t] 
		\caption{CGS Method for strongly convex functions}\label{alg:cgs}
		{\textbf{Input:}} $L$-smooth and $\mu$-strongly-convex function $h$, convex and compact set $\Omega$, total iterations $N$.
		{\textbf{Input:}} The initial point $\bar{\x}_0\in\Omega$ satisfies $h(\bar{\x}_0)-h(\x^*)\leq \delta_0$.
		\begin{algorithmic}[1]
			\STATE $M\leftarrow\sqrt{24L/\mu}$.
			\FOR{$t=1,2,\dots,N$}
			\STATE  $\x_0\leftarrow\bar{\x}_{t-1}$, $\u_0\leftarrow\x_0$
			\FOR{$k=1,2,\dots,M$}
			\STATE $\lambda_k\leftarrow\frac{2}{k+1}$, $\beta_k\leftarrow\frac{2L}{k}$, $\eta_{t,k}\leftarrow\frac{8L\delta_0 2^{-t}}{\mu Nk}$
			\STATE  $\w_k\leftarrow(1-\lambda_k)\x_{k-1}+\lambda_k\u_{k-1}$
			\STATE  $\u_k\leftarrow\text{CndG}(\nabla h(\w_k),\u_{k-1},\beta_k,\eta_{t,k},\Omega)$
			\STATE  $\x_k\leftarrow(1-\lambda_k)\x_{k-1}+\lambda_k\u_k$
			\ENDFOR
			\STATE $\bar{\x}_t=\x_M$
			\ENDFOR
		\end{algorithmic}
	\end{algorithm}
	
	\begin{algorithm}[t] 
		\caption{Procedure $\q^+=$CndG($\r,\q,\beta,\eta,\Omega$)}\label{alg:cndg}
		\begin{algorithmic}[1]
			\STATE  $\q_1\leftarrow\q$.
			\FOR{$t=1,2,\dots$}
			\STATE $\p_t\leftarrow\argmax_{\x\in\Omega}\langle\r+\beta(\q_t-\q),\q_t-\x\rangle$, $\tau_t\leftarrow\langle\r+\beta(\q_t-\q),\q_t-\p_t\rangle$
			\STATE If $\tau_t\leq\eta$, set $\q^+=\q_t$ and  terminate the procedure.
			\STATE $\theta_t\leftarrow\min\left\{1,\frac{\tau_t}{\beta\|\q_t-\p_t\|^2}\right\}$, $\q_{t+1}\leftarrow(1-\theta_t)\q_t+\theta_t\p_t$
			\ENDFOR
		\end{algorithmic}
	\end{algorithm}

	\section{Mirror-Prox Conditional Gradient Sliding} \label{sec:mpcgs}
	For the batch setting of (\ref{eq:problem}), we propose Mirror-Prox Conditional Gradient Sliding (MPCGS), which is presented in Algorithm \ref{alg:mpcgs}. Our MPCGS method combines ideas of Mirror-Prox algorithm~\cite{thekumparampil2019efficient} and CGS method~\cite{lan2016conditional}. The key idea of MPCGS is to solve a proximal problem in each iteration, which makes  $\x_k$ and $\y_k$ satisfy following conditions:
	\begin{itemize}[leftmargin=0.6cm]
		\item $\y_k$ is an $\epsilon_k$-approximate maximizer of $f(\x_k,\cdot)$, i.e., $f(\x_k,\y_k)\geq \max_\y f(\x_k,\y)-\epsilon_k$;
		\item The update of $\x_k$ corresponds to an CGS updating step (Algorithm \ref{alg:cgs}) for $-f(\cdot,\y_k)$, i.e.,
		\begin{align*}
		\v_k= \text{CndG}(\nabla_\x f(\z_k,\y_k),\v_{k-1},\alpha_k,\zeta_k,\cX), \quad \x_k=(1-\gamma_k)\x_{k-1}+\gamma_k\v_k.
		\end{align*}
	\end{itemize}
	The procedure of solving the proximal problem is presented in Algorithm \ref{alg:prox}. In the Prox-step procedure, we iteratively compute an $\epsilon_{cgs}$-approximate maximizer of $f(\x_{r-1},\cdot)$ and then update $\v_r$ and $\x_r$ according to $\y_r$. Since $f(\x,\cdot)$ is smooth and strongly concave for all $\x\in\cX$,  the number of calls to the FO and LO oracles performed by CGS method for finding an $\epsilon_k$-approximate maximizer can be bounded by $\cO(\sqrt{\kappa}\log(1/\epsilon_{cgs}))$  and $\cO(1/\epsilon_{cgs})$ respectively. 
	
	On the other hand, in Algorithm \ref{alg:prox} the CndG procedure computes $\v_r$ as an $\zeta$-approximate solution of the following problem:
	\begin{align*}
	\min_{\u\in\cX}\left\{\nabla_\x f(\z,\y^*(\x_{r-1}))^\top\u+\frac{\alpha}{2}\|\u-\v\|^2\right\}.
	\end{align*}
	Thus, the idealized updating of $\x_r$ in Algorithm \ref{alg:prox} is 
	\begin{align*}
	\x_r=(1-\gamma)\x+\gamma\cdot\argmin_{\u\in\cX}\left\{\nabla_\x f(\z,\y^*(\x_{r-1}))^\top\u+\frac{\alpha}{2}\|\u-\v\|^2\right\}.
	\end{align*}
	Since $\psi(\x)\triangleq(1-\gamma)\x+\gamma\cdot\argmin_{\u\in\cX}\{\nabla_\x f(\z,\y^*(\x_{r-1}))^\top\u+\frac{\alpha}{2}\|\u-\v\|^2\}$ is a $(1/2)$-contraction mapping with a unique fixed point (see the proof of Lemma 2 in the Appendix), the Prox-step procedure only requires $\cO(\log(1/\epsilon))$ iterations if $\epsilon_{cgs}$ and $\zeta$ are small enough. 
	
	The following theorem shows the convergence rate of solving problem (\ref{eq:problem}) by Algorithm \ref{alg:mpcgs}.
	\begin{theorem} \label{thm:csc}
		Suppose the objective function $f(\x,\y)$ satisfies Assumption \ref{ass1}. By setting
		\begin{align*}
		\gamma_k=\frac{3}{k+2},\quad \alpha_k = \frac{6\kappa L}{k+1}, \quad \zeta_k = \frac{LD_\cX^2}{384k(k+1)}, \quad \epsilon_k=\frac{\kappa LD_\cX^2}{k(k+1)(k+2)}
		\end{align*}
		for Algorithm \ref{alg:mpcgs}, then we have
		$$\max_{\y\in\cY}f({\x}_k,\y)-\min_{\x\in\cX} f(\x,\bar{\y}_k)\leq \frac{11\kappa LD_\cX^2}{(k+1)(k+2)}. $$
	\end{theorem}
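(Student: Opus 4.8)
The plan is to reduce everything to a single weighted telescoping of the ``diagonal'' values $f(\x_k,\y_k)$, using $\y_k$ as the (inexact) inner maximizer throughout so that the driving gradient is always the available quantity $\nabla_\x f(\z_k,\y_k)$, never the envelope gradient $\nabla\phi$ of $\phi(\x)\triangleq\max_\y f(\x,\y)$. The first step is to extract a per-iteration inequality. Fixing $\y_k$, partial $L$-smoothness of $f(\cdot,\y_k)$ at the extrapolation point $\z_k=(1-\gamma_k)\x_{k-1}+\gamma_k\v_{k-1}$, together with $\x_k-\z_k=\gamma_k(\v_k-\v_{k-1})$, bounds $f(\x_k,\y_k)$ by $f(\z_k,\y_k)$ plus a linear term in $\v_k-\v_{k-1}$ and a $\|\v_k-\v_{k-1}\|^2$ term. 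Convexity of $f(\cdot,\y_k)$ converts the linear part into function-value differences, and the CndG termination rule of Algorithm~\ref{alg:cndg} supplies the variational inequality $\langle\nabla_\x f(\z_k,\y_k)+\alpha_k(\v_k-\v_{k-1}),\,\v_k-\x\rangle\le\zeta_k$ for every $\x\in\cX$. Expanding this with the three-point identity and using $\alpha_k\ge L\gamma_k$ (valid for the stated parameters since $\kappa\ge1$) to discard the $\|\v_k-\v_{k-1}\|^2$ term yields, for all $\x\in\cX$,
$$f(\x_k,\y_k)\le(1-\gamma_k)f(\x_{k-1},\y_k)+\gamma_k f(\x,\y_k)+\gamma_k\zeta_k+\tfrac{\gamma_k\alpha_k}{2}\big(\|\v_{k-1}-\x\|^2-\|\v_k-\x\|^2\big).$$

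Next I would replace $f(\x_{k-1},\y_k)$ by $f(\x_{k-1},\y_{k-1})+\epsilon_{k-1}$, which holds because $\y_{k-1}$ is an $\epsilon_{k-1}$-approximate maximizer of $f(\x_{k-1},\cdot)$. Multiplying by the weights $w_k=k(k+1)(k+2)$, chosen so that $w_k(1-\gamma_k)=w_{k-1}$ and $w_0=0$, turns the two diagonal terms into the clean telescope $w_kf(\x_k,\y_k)-w_{k-1}f(\x_{k-1},\y_{k-1})$. Summing over $k=1,\dots,K$ and invoking concavity of $f(\x,\cdot)$ against the weighted average $\bar\y_K=\tfrac1{W_K}\sum_k w_k\gamma_k\y_k$, where the normalization is $W_K=\sum_k w_k\gamma_k=\sum_k 3k(k+1)=w_K$ exactly, gives $f(\x_K,\y_K)-\min_\x f(\x,\bar\y_K)\le\sup_\x E(\x)/w_K$, with $E(\x)$ collecting $\sum_k w_{k-1}\epsilon_{k-1}$, $\sum_k w_k\gamma_k\zeta_k$, and $\tfrac12\sum_k w_k\gamma_k\alpha_k(\|\v_{k-1}-\x\|^2-\|\v_k-\x\|^2)$.

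Bounding $\sup_\x E$ is then arithmetic. With the stated $\epsilon_k,\zeta_k$ the first two sums are $\x$-free and collapse to order $K\kappa LD_\cX^2$ and $KLD_\cX^2$ respectively. The squared-distance sum does not telescope cleanly, since $w_k\gamma_k\alpha_k=18\kappa Lk$ grows linearly; Abel summation reduces it to $9\kappa L\big(\|\v_0-\x\|^2+\sum_{j=1}^{K-1}\|\v_j-\x\|^2-K\|\v_K-\x\|^2\big)$, which I bound by $9\kappa LKD_\cX^2$ using the diameter. Dividing by $w_K=K(K+1)(K+2)$ produces the $\cO\big(\kappa LD_\cX^2/((K+1)(K+2))\big)$ order. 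Finally, since $\y_K$ is an $\epsilon_K$-approximate maximizer, $\max_\y f(\x_K,\y)-f(\x_K,\y_K)\le\epsilon_K$; adding this to the previous bound and summing the explicit constants (roughly $1$ from the $\epsilon$-sum, at most $1/128$ from the $\zeta$-sum, $9$ from the distance sum, plus the lower-order $\epsilon_K$) delivers the stated coefficient $11$.

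I expect the main obstacle to be the first step: faithfully translating the inexact CndG stopping criterion $\tau_t\le\eta$ into the clean variational inequality with the correct $\zeta_k$, and ensuring that the entire smoothness/convexity argument goes through with the \emph{approximate} maximizer $\y_k$ rather than $\y^*(\x_k)$. The core difficulty is the simultaneous bookkeeping of two independent inexactness sources—inexact inner maximization ($\epsilon_k$) and inexact linear optimization ($\zeta_k$)—which is what makes the constant accounting delicate. A secondary obstacle is the non-telescoping distance term, where it is the diameter bound, not cancellation, that preserves the $\cO(1/k^2)$ rate.
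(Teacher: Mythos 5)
Your proposal is correct and follows essentially the same route as the paper's own proof: the identical per-iteration inequality (smoothness of $f(\cdot,\y_k)$ at $\z_k$, convexity, the CndG stopping condition, and $\alpha_k\ge L\gamma_k$ to drop the quadratic term), the same weights $w_k=k(k+1)(k+2)$ with $w_k(1-\gamma_k)=w_{k-1}$, the same use of the $\epsilon_{k-1}$-approximate maximizer property of the Prox-step, the same Abel-summation/diameter bound $9k\kappa LD_\cX^2$ for the non-telescoping distance term, and the same constant accounting ($1+\tfrac{1}{128}+9$ plus the trailing $\epsilon_k$) yielding $11$. The only difference is cosmetic bookkeeping: you telescope $w_kf(\x_k,\y_k)$ directly and apply concavity to the retained $\sum_k 3k(k+1)f(\tilde\x,\y_k)$ terms, whereas the paper telescopes $\Phi(k)=w_k\bigl(f(\x_k,\y_k)-f(\tilde\x,\y_k)\bigr)$ and recovers the same sum by an Abel rearrangement — algebraically the same argument.
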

	Theorem \ref{thm:csc} implies the upper bound complexities of the algorithm as follows.
	\begin{corollary} \label{col:csc}
		Under the same assumption of Theorem \ref{thm:csc},  Algorithm \ref{alg:mpcgs} requires $\tilde{\cO}\left(\kappa\sqrt{LD_\cX^2/\epsilon}\right)$ FO complexity and $\tilde{\cO}\left(\kappa^2L^2D_\cX^2D_\cY^2/\epsilon^2\right)$ LO complexity to achieve an $\epsilon$-saddle point.
	\end{corollary}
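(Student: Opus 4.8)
The plan is to convert the rate in Theorem~\ref{thm:csc} into an iteration count and then multiply by the per-iteration oracle cost dictated by the tolerance schedule $\gamma_k,\alpha_k,\zeta_k,\epsilon_k$. First I would invert the convergence bound: to force the primal-dual gap $\tfrac{11\kappa LD_\cX^2}{(k+1)(k+2)}$ below $\epsilon$ it suffices to run $K=\cO\big(\sqrt{\kappa LD_\cX^2/\epsilon}\big)$ outer iterations of Algorithm~\ref{alg:mpcgs}. Each outer iteration executes one Prox-step (Algorithm~\ref{alg:prox}); since the map $\psi$ is a $(1/2)$-contraction with a unique fixed point, the Prox-step terminates after $\cO(\log(1/\epsilon))$ inner (proximal) iterations once $\epsilon_{cgs}$ and $\zeta_k$ are taken small enough. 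Hence the total cost is $\sum_{k=1}^{K}\cO(\log(1/\epsilon))$ times the cost of a single proximal iteration, and it remains to bound that inner cost in both oracles.

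Next I would account the two oracle-consuming pieces inside one proximal iteration. The $\y$-update invokes CGS on the $L$-smooth, $\mu$-strongly-concave map $f(\x_{r-1},\cdot)$ to produce an $\epsilon_{cgs}$-approximate maximizer; by the guarantees quoted for Algorithm~\ref{alg:cgs} this costs $\cO(\sqrt{\kappa}\log(1/\epsilon_{cgs}))$ FO calls and, writing the LO bound $\cO(1/\epsilon_{cgs})$ in its explicit form, $\cO(LD_\cY^2/\epsilon_{cgs})$ LO calls (this is where $D_\cY^2$ will enter the final count). The $\x$-update invokes CndG (Algorithm~\ref{alg:cndg}) with strong-convexity parameter $\alpha_k$ and tolerance $\zeta_k$, whose LO count is $\cO(\alpha_k D_\cX^2/\zeta_k)$; substituting $\alpha_k=\tfrac{6\kappa L}{k+1}$ and $\zeta_k=\tfrac{LD_\cX^2}{384k(k+1)}$ collapses this to $\cO(\kappa k)$.

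For the FO complexity I would fix $\epsilon_{cgs}$ at the level forced by $\epsilon_k=\tfrac{\kappa LD_\cX^2}{k(k+1)(k+2)}$, so that $\log(1/\epsilon_{cgs})=\tilde\cO(1)$ and each proximal iteration costs $\tilde\cO(\sqrt\kappa)$ FO calls. Summing over the $\cO(\log(1/\epsilon))$ proximal iterations and the $K$ outer iterations gives $\tilde\cO(\sqrt{\kappa}\,K)=\tilde\cO\big(\kappa\sqrt{LD_\cX^2/\epsilon}\big)$, matching the claimed FO bound exactly.

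For the LO complexity I would sum both contributions against the $k$-dependent tolerances. The CndG-on-$\x$ term contributes $\sum_{k\le K}\tilde\cO(\kappa k)=\tilde\cO(\kappa K^2)=\tilde\cO(\kappa^2 LD_\cX^2/\epsilon)$, which scales only as $1/\epsilon$ and is therefore subdominant. The governing term is the CGS-on-$\y$ count $\cO(LD_\cY^2/\epsilon_{cgs})$, which grows polynomially in $k$ (roughly like $k^3$, through $1/\epsilon_{cgs}\propto 1/\epsilon_k$) and, after $\sum_{k\le K}k^3=\cO(K^4)$ and the substitution $K^2\asymp \kappa LD_\cX^2/\epsilon$, produces the advertised $\tilde\cO\big(\kappa^2 L^2D_\cX^2D_\cY^2/\epsilon^2\big)$. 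The main obstacle I anticipate is pinning down the precise dependence of the inner tolerance $\epsilon_{cgs}$ on $\epsilon_k$ and $\kappa$: this tolerance must be small enough that the accumulated CGS error does not break the $(1/2)$-contraction of $\psi$, and it is exactly this constant that controls the extra factor of $\kappa$ separating the correct $\kappa^2$ power from a naive under- or over-count.
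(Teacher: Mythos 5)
Your proposal is correct and follows essentially the same route as the paper's own proof: count $N=\Theta\bigl(\sqrt{\kappa LD_\cX^2/\epsilon}\bigr)$ outer iterations from Theorem \ref{thm:csc}, multiply by the $\tilde\cO(1)$ Prox-step iterations and the per-call CGS cost $\cO\bigl(\sqrt{\kappa}\log(1/\epsilon_{cgs})\bigr)$ FO / $\cO(LD_\cY^2/\epsilon_{cgs})$ LO plus the CndG cost $\cO(\alpha_k D_\cX^2/\zeta_k)$ LO, then substitute the parameter schedule. In fact your accounting (identifying the CGS-on-$\y$ term as dominant via $\sum_{k\le K}k^3=\cO(K^4)$, and the CndG-on-$\x$ term as the subdominant $\tilde\cO(\kappa^2 LD_\cX^2/\epsilon)$) spells out the "plugging in all parameters" step that the paper leaves implicit.
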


	\begin{algorithm}[t] 
		\caption{Mirror-Prox Conditional Gradient Sliding}\label{alg:mpcgs}
		{\textbf{Input:}} Objective function $f(\x,\y)$, parameters $\gamma_k$, $\alpha_k$, $N$, $\x_0$, $\y_0$, $\epsilon_k$ and $\zeta_k$.\\
		{\textbf{Output:}} $\x_N$, $\bar{\y}_N$.
		\begin{algorithmic}[1]
			\STATE $\v_0 \leftarrow \x_0$
			\FOR{$k=1,2,\dots,N$}
			\STATE  $\z_k\leftarrow(1-\gamma_k)\x_{k-1}+\gamma_k\v_{k-1}$.
			\STATE\label{line:mpcgs-sub} $(\x_k,\y_k,\v_k)\leftarrow\text{Prox-step}(f,\x_{k-1},\y_{k-1},\z_k,\v_{k-1},\gamma_k,\alpha_k,\zeta_k,\epsilon_k)$.
			\STATE $\bar{\y}_k\leftarrow\frac{3}{k(k+1)(k+2)}\sum_{s=1}^k s(s+1)\y_s$.
			\ENDFOR
		\end{algorithmic}
	\end{algorithm}
	
	\begin{algorithm}[t]
		\caption{Procedure   ($\x_R,\y_{R},\v_{R}$)$=$Prox-step($f,\x_0,\y_0,\z,\v,\gamma,\alpha,\zeta,\epsilon$)} \label{alg:prox}
		\begin{algorithmic}[1]
			\STATE $\epsilon_{cgs}\leftarrow \epsilon/(64\kappa)$, $\epsilon_{mp}\leftarrow4\gamma\sqrt{2\kappa L\epsilon_{cgs}/\alpha^2+2\zeta/\alpha}$, $R\leftarrow\left\lceil\log_2(4D_\cX/\epsilon_{mp})\right\rceil$.
			\FOR{$r=1,\dots,R$}
			\STATE Use CGS method (Algorithm \ref{alg:cgs}) with objective function $-f(\x_{r-1},\cdot)$ and start point $\y_0$ to compute $\y_r$ such that:
			$f(\x_{r-1},\y_r)\geq \max_{\y\in\cY}f(\x_{r-1},\y)-\epsilon_{cgs} $
			\STATE $\v_r\leftarrow\text{CndG(}\nabla_\x f(\z,\y_r),\v,\alpha,\zeta,\cX)$,\quad $\x_r\leftarrow(1-\gamma)\x_0+\gamma\v_r$
			\ENDFOR
		\end{algorithmic}
	\end{algorithm}

	\section{Mirror-Prox Stochastic Conditional Gradient Sliding} \label{sec:mpscgs}
	In this section, we extend MPCGS to the stochastic setting (\ref{eq:stochastic}). Recall that we adopt CGS to find $\epsilon$-approximate maximizer of problem $ \max_{\y\in\cY} f(\x_k,\y)$ in the batch setting, which only require logarithmic iterations. In the stochastic case, we would like to use the STORC~\cite{hazan2016variance} algorithm instead. Since the original STORC can only be applied to the finite-sum situation, we have to first study an inexact variant of STORC which does not depend on the exact gradient. Then we leverage the inexact STORC algorithm to establish our projection-free algorithm for stochastic saddle point problems.
	
	\subsection{Inexact Stochastic Variance Reduced Conditional Gradient Sliding}
	We propose Inexact STORC (iSTORC) algorithm to solve the following stochastic convex optimization problem:
	\begin{equation}
	\min_{\x\in\Omega}h(\x)=\BE_\xi[H(\x;\xi)], \label{eq:min-h}   
	\end{equation}
	where $\xi\in\Xi$ is a random variable; the feasible set $\Omega$ is convex, compact and has diameter $D$. We assume that $h(\x)$ is $L$-smooth and $\mu$-strongly convex. We also suppose that the algorithm can access the stochastic gradient $H(\x;\xi)$ which satisfies:
	\begin{itemize}[leftmargin=0.6cm]
		\item $\BE[\nabla H(\x;\xi)]=\nabla h(\x)$, \quad $\forall \x\in\Omega$, $\forall \xi\in\Xi$.
		\item $\BE[\|\nabla H(\x;\xi)-\nabla h(\x)\|^2]\leq \sigma^2, \quad \forall \x\in\Omega, \forall \xi\in\Xi.$
		\item $\BE[\|\nabla H(\x_1;\xi)-\nabla H(\x_2;\xi)\|^2]\leq L^2\|\x_1-\x_2\|^2. \quad \forall (\x_1,\x_2)\in\Omega^2, \forall \xi\in\Xi$
	\end{itemize}
	
	The idea of iSTORC is to approximate the exact gradient in STORC by appropriate number of stochastic gradient samples. The following theorem shows the convergence rate of iSTORC.
	
	\begin{theorem} \label{thm:istorc}
		Running Inexact STORC (Algorithm \ref{alg:istorc}) with the following parameters:
		{\small\begin{align*}
			\lambda_k=\frac{2}{k+1},  \beta_k=\frac{3L}{k},   M=\left\lceil4\sqrt{2\kappa}\right\rceil,  \eta_{t,k}=\frac{\kappa LD^2}{2^{t-2}Mk},  S=4800M\kappa,  Q_t=\left\lceil\frac{1200\cdot 2^{t-1}\sigma^2\sqrt{\kappa}}{L^2D^2}\right\rceil,
			\end{align*}}
		we have 
		\begin{align*}
		\BE[h(\bar{\x}_t)-h(\x^*)]\leq\frac{LD^2}{2^{t+1}}, 
		\end{align*}
		where $\x^*\in\argmin_{\x\in\Omega} h(\x)$.
	\end{theorem}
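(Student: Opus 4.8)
The plan is to induct on the epoch index $t$, showing that each outer iteration contracts the expected suboptimality by exactly one factor of two. The base case holds because $L$-smoothness and $\|\bar{\x}_0-\x^*\|\le D$ give $h(\bar{\x}_0)-h(\x^*)\le \frac{L}{2}D^2 = LD^2/2$, which matches the claimed bound at $t=0$. For the inductive step I would assume $\BE[h(\bar{\x}_{t-1})-h(\x^*)]\le LD^2/2^{t}$ and show the epoch produces $\BE[h(\bar{\x}_{t})-h(\x^*)]\le LD^2/2^{t+1}$. The inner loop of each epoch is the accelerated conditional-gradient-sliding recursion of Algorithm \ref{alg:cgs}, run with a variance-reduced stochastic gradient in place of $\nabla h(\w_k)$, so the whole proof reduces to controlling how the gradient error propagates through that recursion.

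First I would fix the gradient estimator. Writing $\tilde{\x}=\bar{\x}_{t-1}$ for the epoch snapshot and $\hat{\g}=\frac{1}{Q_t}\sum_{j=1}^{Q_t}\nabla H(\tilde{\x};\xi_j)$ for its inexact full gradient (computed once, at the start of the epoch), the estimator at the extrapolation point $\w_k$ is
\[
\tilde{\g}_k=\frac{1}{S}\sum_{i=1}^{S}\big(\nabla H(\w_k;\xi_i)-\nabla H(\tilde{\x};\xi_i)\big)+\hat{\g}.
\]
The crucial step is to split the error $\tilde{\g}_k-\nabla h(\w_k)=\big(\tilde{\g}_k-\nabla h(\w_k)-\b\big)+\b$, where $\b=\hat{\g}-\nabla h(\tilde{\x})$ is the \emph{epoch-constant} snapshot bias. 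The first piece is conditionally mean-zero given the snapshot and the past, and by the $L$-average-smoothness assumption its second moment is at most $\frac{L^2}{S}\|\w_k-\tilde{\x}\|^2$; the second piece satisfies $\BE\|\b\|^2\le \sigma^2/Q_t$ by the bounded-variance assumption. This decomposition is exactly where iSTORC departs from STORC: in the finite-sum version $\b=\vzero$, whereas here the snapshot gradient carries a bias that is identical across all $M$ inner steps.

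Next I would carry the standard strongly-convex accelerated analysis through with this inexact gradient. The error enters the one-step recursion only through linear cross terms of the form $\langle \tilde{\g}_k-\nabla h(\w_k),\,\u_{k-1}-\x^*\rangle$ (the CndG accuracy $\eta_{t,k}$ enters separately, through the approximate solution of the projection subproblem). The mean-zero fluctuation contributes nothing to these cross terms in expectation, so it survives only as a quadratic term governed by the second-moment bound above; the epoch-constant bias $\b$, however, is correlated with $\u_{k-1}$ through the shared snapshot samples and cannot be dropped, so I would bound its contribution by Cauchy--Schwarz and Young's inequality against $\BE\|\b\|^2\le \sigma^2/Q_t$. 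To close the variance-reduction term I would bound $\BE\|\w_k-\tilde{\x}\|^2$ by the epoch-initial suboptimality using $\mu$-strong convexity, so that $\frac{L^2}{S}\BE\|\w_k-\tilde{\x}\|^2$ is controlled by a multiple of $\frac{L^2}{\mu S}\cdot LD^2/2^{t}$.

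Finally, summing the accelerated recursion over the $M=\lceil 4\sqrt{2\kappa}\,\rceil$ inner steps produces an end-of-epoch bound that is a sum of four contributions: the deterministic acceleration term (small because $M^2\asymp\kappa$), the variance-reduction term (small because $S=4800M\kappa$), the snapshot-bias term (small because $Q_t$ grows geometrically like $2^{t}$), and the CndG inexactness (absorbed by the chosen $\eta_{t,k}$). The parameter values in the statement are calibrated so that each contribution is at most a quarter of the target $LD^2/2^{t+1}$, which closes the induction. I expect the main obstacle to be the epoch-constant bias $\b$: because it does not average out across inner steps, one cannot treat it as ordinary zero-mean gradient noise, and its interaction with the accelerated momentum must be tracked through the Lyapunov recursion and balanced against the geometrically increasing snapshot batch size $Q_t$ rather than bounded step-by-step.
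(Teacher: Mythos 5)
Your overall architecture --- outer induction on the epoch index with base case $h(\bar{\x}_0)-h(\x^*)\le LD^2/2$, the decomposition of the gradient error into a variance-reduced mini-batch part with second moment $\lesssim \frac{L^2}{S}\|\w_k-\x_0\|^2$ and a snapshot part with second moment $\le \sigma^2/Q_t$, and the final calibration of $M$, $S$, $Q_t$ --- coincides with the paper's. One refinement you introduce is unnecessary: the paper never exploits unbiasedness of any part of the estimator, so the distinction between the conditionally mean-zero fluctuation and the epoch-constant bias $\b$ plays no role. The paper invokes Lemma 3 of Hazan--Luo (Lemma \ref{lem:istorc} in the appendix), whose hypothesis is only a bound on the \emph{total} mean-squared error, $\BE\|\r_s-\nabla h(\w_s)\|^2\le \frac{L^2D_t^2}{Ms(s+1)}$ with $D_t^2=\kappa D^2/2^{t-1}$; both pieces are lumped together by Young's inequality, and the ``interaction of the bias with the accelerated momentum'' that you anticipate as the main obstacle never has to be tracked at all.

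The genuine gap is your treatment of the variance-reduction term: you propose to bound $\BE\|\w_k-\tilde{\x}\|^2$ ``by the epoch-initial suboptimality using $\mu$-strong convexity.'' Strong convexity plus the outer inductive hypothesis controls only $\BE\|\x_0-\x^*\|^2\le D_t^2$. The extrapolation point $\w_k=(1-\lambda_k)\x_{k-1}+\lambda_k\u_{k-1}$ involves the CndG output $\u_{k-1}$, which can sit anywhere in $\Omega$, so from epoch-initial information alone the best available bound is the diameter $D$; once $2^{t-1}>\kappa$ we have $D>D_t$, and that crude bound is too weak for the induction to close in later epochs. Controlling $\|\w_k-\x^*\|$ actually requires knowing that the \emph{inner} iterates $\x_{k-1},\x_{k-2}$ are already nearly optimal --- which is precisely what is being proved, a circularity your one-shot summation over the $M$ inner steps does not resolve. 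The paper breaks it with a second, nested induction on $k$: it rewrites $\w_s$ as an affine combination of $\x_{s-1}$ and $\x_{s-2}$ with bounded coefficients (eliminating $\u_{s-1}$ entirely), obtaining $\|\w_s-\x^*\|^2\le 8\|\x_{s-1}-\x^*\|^2+2\|\x_{s-2}-\x^*\|^2$, and then converts these distances into function values by strong convexity, using the inner inductive hypothesis $\BE[h(\x_{s-1})-h(\x^*)]\le \frac{8LD_t^2}{s(s-1)}$ to get a bound of order $\frac{\kappa L^2 D_t^2}{s(s+1)}$ that meets the (decreasing in $s$) threshold required by Lemma \ref{lem:istorc}. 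Without this double-induction structure, or an equivalent absorption argument, the term $\frac{L^2}{S}\BE\|\w_k-\x_0\|^2$ in your recursion cannot be closed.
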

	
	Theorem \ref{thm:istorc} implies the following upper bound complexities of iSTORC.
	\begin{corollary} \label{col:istorc}
		To achieve $\bar\x_t$ such that $\BE[h(\bar{\x}_t)-h(\x^*)]\leq\epsilon$, iSTORC (Algorithm \ref{alg:istorc}) requires $\cO\left((\sqrt{\kappa}/(L\epsilon )+\kappa^2)\log(LD^2/\epsilon)\right)$ SFO complexity and $\cO\left(LD^2/\epsilon\right)$ LO complexity.
	\end{corollary}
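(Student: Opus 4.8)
The plan is to obtain Corollary \ref{col:istorc} directly from Theorem \ref{thm:istorc} by a bookkeeping argument: the theorem already pins down the per-epoch progress, so all that remains is (i) to fix the number $T$ of outer iterations needed to reach accuracy $\epsilon$, and (ii) to sum the SFO and LO calls incurred across the $T$ epochs. First I would fix the epoch count. By Theorem \ref{thm:istorc}, after $t$ outer iterations $\BE[h(\bar{\x}_t)-h(\x^*)]\leq LD^2/2^{t+1}$, so it suffices to take $t=T$ with $LD^2/2^{T+1}\leq\epsilon$, i.e. $T=\lceil\log_2(LD^2/\epsilon)\rceil-1=\cO(\log(LD^2/\epsilon))$. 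In particular $2^T=\Theta(LD^2/\epsilon)$, and this is the quantity that will drive every geometric sum below.

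Next, the SFO complexity. Reading off the algorithm's structure, in each epoch iSTORC forms one approximate snapshot gradient from $Q_t$ stochastic samples and then performs $M$ inner accelerated (CndG) steps, each of which builds a variance-reduced estimate from an inner mini-batch of size $S$ (hence $\Theta(S)$ stochastic gradients per inner step). Thus the per-epoch SFO cost is $\Theta(Q_t+MS)$. The snapshot part telescopes as a geometric series, $\sum_{t=1}^T Q_t=\cO(2^T\sigma^2\sqrt{\kappa}/(L^2D^2))=\cO(\sigma^2\sqrt{\kappa}/(L\epsilon))$, since $Q_t\propto 2^t$ is dominated by its last term and the factor $D^2$ cancels. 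The inner part is constant in $t$ and equals $MS=\Theta(\sqrt{\kappa}\cdot\kappa^{3/2})=\Theta(\kappa^2)$ per epoch, so summing over the $T$ epochs yields $\cO(\kappa^2\log(LD^2/\epsilon))$. Adding the two contributions and using $1\leq\log(LD^2/\epsilon)$ gives the claimed $\cO\big((\sqrt{\kappa}/(L\epsilon)+\kappa^2)\log(LD^2/\epsilon)\big)$, with $\sigma^2$ absorbed into the constant.

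For the LO complexity, each inner step invokes CndG (Algorithm \ref{alg:cndg}) once, and by the standard Frank-Wolfe/CGS guarantee a CndG call with curvature $\beta_k$ and tolerance $\eta_{t,k}$ over a set of diameter $D$ terminates after $\cO(\beta_k D^2/\eta_{t,k})$ LO calls. Substituting $\beta_k=3L/k$ and $\eta_{t,k}=\kappa LD^2/(2^{t-2}Mk)$, the factors of $k$ and $LD^2$ cancel and this collapses to $\cO(2^t M/\kappa)$, independent of the inner index $k$. Summing over the $M$ inner steps gives $\cO(2^t M^2/\kappa)=\cO(2^t)$ per epoch, using $M^2=\Theta(\kappa)$; summing the resulting geometric series over $t\leq T$ gives $\cO(2^T)=\cO(LD^2/\epsilon)$, exactly as claimed.

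The main obstacle I expect is not any single estimate but getting the per-epoch accounting exactly right — in particular correctly attributing which batch governs which quantity (the growing $Q_t$ for the snapshot versus the constant $S$ for the inner variance-reduced gradients), and confirming that the prescribed $\eta_{t,k}$ is what makes the CndG iteration count independent of $k$ so the inner sum collapses cleanly. Once the per-epoch costs $\Theta(Q_t+MS)$ (SFO) and $\Theta(2^t)$ (LO) are in hand, the proof reduces to one geometric sum (supplying the $1/\epsilon$ terms via $2^T=\Theta(LD^2/\epsilon)$) and one constant-times-$T$ sum (supplying the $\log$ factor), with the single identity $M^2/\kappa=\Theta(1)$ being what forces the LO total down to $\cO(LD^2/\epsilon)$.
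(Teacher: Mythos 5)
Your proposal is correct and follows essentially the same route as the paper's own proof: set $N=\Theta(\log_2(LD^2/\epsilon))$ via Theorem \ref{thm:istorc}, bound the SFO cost by $\sum_t (Q_t + MS)$, and bound the LO cost via the standard $\cO(\beta_k D^2/\eta_{t,k})$ guarantee for CndG, with $M^2=\Theta(\kappa)$ collapsing the sums. The only difference is that you carry out the substitutions and geometric sums explicitly (even obtaining a slightly tighter SFO bound, since $\sum_t Q_t = \cO(2^N)$ needs no extra $\log$ factor), whereas the paper states these steps without detail.
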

	\begin{remark}
		If the objective function has the finite-sum form, we can choose $\cQ_t=\{\xi_1,\dots,\xi_n\}$ and obtain the same upper complexities bound as STORC. 
	\end{remark}
	\begin{remark}
		Notice that the SFO complexity of SCGS is $\cO(\kappa/(L\epsilon))$. When  $2^{-\sqrt{\kappa}}<\epsilon<\kappa^{-1.5}$,
		iSTORC has better SFO complexity than SCGS.
	\end{remark}

	\begin{algorithm}[t] 
		\caption{Inexact STORC (iSTORC)} \label{alg:istorc}
		{\textbf{Input:}} $L$-smooth and $\mu$-strongly convex function $h(\x)$, initial point $\bar{\x}_0\in\Omega$ and total iteration $N$.\\
		{\textbf{Input:}} Parameters $\gamma_k$, $\beta_k$, $Q_t$, $S$, $M$ and $\eta_{t,k}$.
		
		\begin{algorithmic}[1]
			\FOR{$t=1,2,\dots,N$}
			\STATE  $\x_0\leftarrow\bar{\x}_{t-1}$, $\u_0\leftarrow\x_0$.
			\STATE Draw $Q_t$ samples $\cQ_t=\{\xi_{j}\}_{j=1}^{Q_t}$, and compute $\vnu\leftarrow\nabla h_{\cQ_t}(\x_0)$.
			\FOR{$k=1,2,\dots,M$}
			\STATE  $\w_k\leftarrow(1-\lambda_k)\x_{k-1}+\lambda_k\u_{k-1}$.
			\STATE Draw $S$ samples $\cS_{t,k}=\{\xi_{t,k,j}\}_{j=1}^{S}$ and  compute $\r_k\leftarrow\nabla h_{\cS_{t,k}}(\w_k)-\nabla h_{\cS_{t,k}}(\x_0)+\vnu$.
			\STATE $\u_k\leftarrow$CndG($\r_k,\u_{k-1},\beta_k,\eta_{t,k},\Omega$), $\x_k\leftarrow(1-\lambda_k)\x_{k-1}+\lambda_k\u_k$.
			\ENDFOR
			\STATE $\bar{\x}_t\leftarrow\x_M$.
			\ENDFOR
		\end{algorithmic}
	\end{algorithm}

	\subsection{Mirror-Prox Stochastic Conditional Gradient Sliding}
	We present  our Mirror-Prox Stochastic Conditional Gradient Sliding (MPSCGS) in Algorithm \ref{alg:mpscgs}. The idea of MPSCGS is similar to that of MPCGS. The main difference is that we solve the proximal problem in MPSCGS by a stochastic proximal step, where we adopt the proposed iSTORC algorithm. Specifically, in each iteration we ensures that $\x_k$ and $\y_k$ satisfy following conditions:
	\begin{itemize}[leftmargin=0.6cm]
		\item $\y_k$ is an $\epsilon_k$-approximate maximizer of $f(\x_k,\cdot)$ in expectation, i.e.,
		\[\BE[f(\x_k,\y_k)]\geq \BE[\max_{\y\in\cY}f(\x_k,\y)]-\epsilon_k;\]
		\item The update of $\x_k$ and $\v_k$ ensures that
		\begin{align*}
		\v_k= \text{CndG}(\nabla_\x f_{\cP_k}(\z_k,\y_k),\v_{k-1},\alpha_k,\zeta_k,\cX), \quad \x_k=(1-\gamma_k)\x_{k-1}+\gamma_k\v_k.
		\end{align*}
	\end{itemize}

	The following theorem shows the convergence rate of solving problem (\ref{eq:stochastic}).
	\begin{theorem} \label{thm:mpscgs}
		Suppose the objective function $f(\x,\y)$ satisfies Assumption \ref{ass1} and \ref{ass2}. If we set
		$$\gamma_k=\frac{3}{k+2},\ \  \alpha_k = \frac{6\kappa L}{k+1}, \ \  \zeta_k = \frac{LD_\cX^2}{576k(k+1)}, \ \  \epsilon_k=\frac{\kappa LD_\cX^2}{k(k+1)(k+2)},\ \  P_k=\left\lceil\frac{96\sigma^2(k+1)^3}{\kappa L^2D_\cX^2} \right\rceil $$
		for Algorithm \ref{alg:mpscgs}, then we have
		$$\BE\left[\max_{\y\in\cY}f({\x}_k,\y)-\min_{\x\in\cX} f(\x,\bar{\y}_k)\right]\leq \frac{12\kappa LD_\cX^2}{(k+1)(k+2)}. $$
	\end{theorem}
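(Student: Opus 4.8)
The plan is to lift the batch argument behind Theorem~\ref{thm:csc} to the stochastic regime, treating the outer MPSCGS loop as an inexact accelerated (CGS-type) scheme that minimizes the primal envelope $\Phi(\x)\triangleq\max_{\y\in\cY}f(\x,\y)$ while the weighted dual iterate $\bar\y_k$ tracks the concave part. Writing $\y^*(\x)=\argmax_{\y\in\cY}f(\x,\y)$, which is unique by $\mu$-strong concavity, $\Phi$ is convex and $\cO(\kappa L)$-smooth, consistent with the choice $\alpha_k=6\kappa L/(k+1)$. The whole difficulty relative to the batch proof is the replacement of the exact partial gradient by the mini-batch surrogate $\g_k=\nabla_\x f_{\cP_k}(\z_k,\y_k)$ together with the fact that the inner maximizer is only accurate in expectation, so I would carry three error quantities through the analysis: the inexact-maximization gap $\epsilon_k$ (with $\BE[\max_\y f(\x_k,\y)-f(\x_k,\y_k)]\le\epsilon_k$, hence $\BE\|\y_k-\y^*(\x_k)\|^2\le 2\epsilon_k/\mu$ by strong concavity), the CndG tolerance $\zeta_k$, and the mini-batch error $\e_k=\g_k-\nabla_\x f(\z_k,\y_k)$, whose conditional second moment obeys $\BE\|\e_k\|^2\le\sigma^2/P_k\le\kappa L^2D_\cX^2/(96(k+1)^3)$ by Assumption~\ref{ass2} and the choice of $P_k$.

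First I would reproduce the single-step inequality of the batch analysis. Convexity of $f(\cdot,\y_k)$ at $\z_k$ turns $\langle\nabla_\x f(\z_k,\y_k),\z_k-\x\rangle$ into a function-value gap $f(\z_k,\y_k)-f(\x,\y_k)$; $L$-smoothness along $\x_k-\z_k=\gamma_k(\v_k-\v_{k-1})$ produces the usual descent bound on $f(\x_k,\y_k)$; and the $\zeta_k$-approximate optimality condition $\langle\g_k+\alpha_k(\v_k-\v_{k-1}),\v_k-\u\rangle\le\zeta_k$ returned by CndG converts the linear term into the telescoping distance $\tfrac{\alpha_k}{2}\big(\|\v_{k-1}-\u\|^2-\|\v_k-\u\|^2-\|\v_k-\v_{k-1}\|^2\big)$.

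Second, to pass from $\g_k$ back to the true gradient I would split $\langle\e_k,\v_k-\u\rangle=\langle\e_k,\v_{k-1}-\u\rangle+\langle\e_k,\v_k-\v_{k-1}\rangle$. Since $\v_{k-1}$ and the fixed comparison point $\u$ are measurable with respect to the history before the $k$-th batch is drawn, the first piece is conditionally mean-zero and disappears upon taking expectations; the second piece is bounded by Young's inequality as $\tfrac{\|\e_k\|^2}{2\alpha_k}+\tfrac{\alpha_k}{2}\|\v_k-\v_{k-1}\|^2$, where the quadratic is exactly the term released by the strong convexity of the prox above, so it cancels and only $\BE\tfrac{\|\e_k\|^2}{2\alpha_k}\le\sigma^2/(2\alpha_kP_k)$ survives. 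On the concave side, concavity of $f(\x,\cdot)$ gives $f(\x,\bar\y_k)\ge\frac{1}{W_k}\sum_{s=1}^k w_s f(\x,\y_s)$ with $w_s=s(s+1)$ and $W_k=k(k+1)(k+2)/3$, while $\BE[\max_\y f(\x_k,\y)-f(\x_k,\y_k)]\le\epsilon_k$ converts the primal descent on $f(\x_k,\y_k)$ into the desired $\BE\max_\y f(\x_k,\y)$ at the cost of an additive $\epsilon_k$.

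Finally I would multiply the per-step inequality by $w_k$, sum over $k$, take expectations, and divide by $W_k$. The distance terms telescope because $\alpha_kw_k$ is essentially constant, leaving a leading contribution of order $\alpha_1D_\cX^2\asymp\kappa LD_\cX^2$ over $W_k\asymp k^3$, i.e. the claimed $\kappa LD_\cX^2/((k+1)(k+2))$ rate, while the accumulated error $\BE\sum_k w_k\big(\tfrac{\sigma^2}{2\alpha_kP_k}+\zeta_k+\epsilon_k\big)$ supplies the gap between the batch constant $11$ and the stochastic constant $12$. I expect the bookkeeping of this last sum to be the main obstacle: the variance weight is $w_k/\alpha_k\asymp(k+1)^2$, so only the cubic growth $P_k\asymp(k+1)^3$ keeps each variance term at order $LD_\cX^2$ and the whole sum at order $k\,LD_\cX^2$, hence $\cO(LD_\cX^2/k^2)$ after division --- a $\kappa$-free, strictly lower-order correction. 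The subtle coupling is that the inexact-maximization error $\epsilon_k$ and the stochastic noise $\e_k$ both feed the same iterate $\v_k$, so the martingale/independence argument and the Young split must be arranged so that neither error is double-counted nor allowed to interact multiplicatively; verifying that the chosen $\zeta_k$ and $\epsilon_k$ match the $(k+1)^{-3}$ decay of the variance budget is where the constants are pinned down.
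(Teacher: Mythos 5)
Your outer-loop analysis coincides, essentially step for step, with the paper's own proof of Theorem~\ref{thm:mpscgs}: the same smoothness-plus-convexity single-step bound at $\z_k$, the same use of the CndG $\zeta_k$-optimality condition to produce the telescoping prox distances, the same split of the mini-batch error into a conditionally mean-zero term paired with $\v_{k-1}-\tilde{\x}$ plus a Young-type term absorbed by the quadratic $\|\v_k-\v_{k-1}\|^2$ (the paper's Eq.~(10)--(11)), the same $s(s+1)$-weighted telescoping with Abel summation on the dual side, and the same bookkeeping by which the variance sum contributes an extra $\cO(kLD_\cX^2)$ that lifts the batch constant $11$ to the stochastic constant $12$.

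The genuine gap is that you assume, rather than prove, the per-iteration property on which all of this rests: that the output of Stochastic-Prox-step (Algorithm~\ref{alg:sprox}) satisfies $\BE[f(\x_k,\y_k)]\ge\BE[\max_{\y\in\cY}f(\x_k,\y)]-\epsilon_k$ together with $\v_k=\mathrm{CndG}(\nabla_\x f_{\cP_k}(\z_k,\y_k),\v_{k-1},\alpha_k,\zeta_k,\cX)$ and $\x_k=(1-\gamma_k)\x_{k-1}+\gamma_k\v_k$. The algorithm does not enforce this directly: inside the prox-step, iSTORC only makes each $\y_r$ an $\epsilon_{cgs}$-approximate maximizer of $f(\x_{r-1},\cdot)$, i.e.\ at the \emph{previous} inner iterate, whereas your analysis needs $\y_R$ to approximately maximize $f(\x_R,\cdot)$ at the \emph{output} point. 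Bridging the two is exactly the content of the paper's Lemma~\ref{lem:sprox}: using Lemma~\ref{lem:contraction} one shows the idealized update map is a $\tfrac12$-contraction, so after $R=\lceil\log_2(4D_\cX^2/\epsilon_{mp})\rceil$ inner iterations both $\x_{R-1}$ and $\x_R$ lie within $\epsilon_{mp}$ (in expected squared distance) of its fixed point; then $\kappa$-Lipschitzness of $\y^*(\cdot)$ (Lemma~\ref{lem:kappa}), strong concavity, and smoothness convert this into the $\epsilon_k$-maximizer property at $\x_R$. Crucially, this inner argument is also where the batch size plays its \emph{second} role, which your accounting misses entirely: the mini-batch variance $\sigma^2/P_k$ enters $\epsilon_{mp}$ through the bound on $\BE\|\v_r-\v^*_{r-1}\|^2$, and this is what dictates the choices $\epsilon_{cgs}=\epsilon_k/(64\kappa)$, the tolerance $\zeta_k=LD_\cX^2/(576k(k+1))$ (note $576$ rather than the batch value $384$), and $P_k\asymp\sigma^2(k+1)^3/(\kappa L^2D_\cX^2)$. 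Without this lemma, your three error quantities are postulated properties of the iterates rather than consequences of running Algorithm~\ref{alg:mpscgs} with the stated parameters, so the proposal, as written, proves the reduction but not the theorem.
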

	Theorem \ref{thm:mpscgs} implies the following corollary of oracle complexity.
	\begin{corollary} \label{col:mpscgs-f}
		Under the assumption in Theorem \ref{thm:mpscgs} and the assumption that objective function has finite-sum form of (\ref{eq:finitesum}),  Algorithm \ref{alg:mpscgs} needs $\tilde{\cO}\big((n+\kappa^2)\sqrt{\kappa LD_\cX^2/\epsilon}+\kappa\sigma^2 D_\cX^2/\epsilon^2\big)$ IFO complexity and  $\tilde{\cO}\left(\kappa^2L^2D_\cX^2D_\cY^2/\epsilon^2\right)$ LO complexity to achieve an $\epsilon$-saddle point.
	\end{corollary}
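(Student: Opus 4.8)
The plan is to convert the iteration-complexity guarantee of Theorem~\ref{thm:mpscgs} into oracle counts by multiplying the number of required outer iterations by the per-iteration cost of the two subroutines invoked inside the stochastic Prox-step: iSTORC for the inner $\y$-maximization and CndG for the $\x$-update. First I would fix the number of outer iterations $N$. Setting the right-hand side of the bound in Theorem~\ref{thm:mpscgs} to $\epsilon$, i.e. $\frac{12\kappa LD_\cX^2}{(k+1)(k+2)}\le\epsilon$, yields $N=\cO\big(\sqrt{\kappa LD_\cX^2/\epsilon}\big)$; in particular $N^2=\cO(\kappa LD_\cX^2/\epsilon)$ and $N^4=\cO(\kappa^2L^2D_\cX^4/\epsilon^2)$, the two quantities that will surface after summing the per-iteration costs.

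For the IFO complexity there are two sources of gradient queries. The first is the inner maximization over $\y$: as in the batch Prox-step (Algorithm~\ref{alg:prox}) the stochastic Prox-step calls the sliding subroutine only $R=\tilde\cO(1)$ times per outer iteration, and in the finite-sum case the Remark following Corollary~\ref{col:istorc} lets us run iSTORC with full-batch variance reference $\cQ_t=\{\xi_1,\dots,\xi_n\}$, incurring the STORC cost $\tilde\cO(n+\kappa^2)$ per call; summed over $N$ iterations this contributes $\tilde\cO\big((n+\kappa^2)\sqrt{\kappa LD_\cX^2/\epsilon}\big)$. The second source is the mini-batch gradient $\nabla_\x f_{\cP_k}$ used in the $\x$-update, whose batch size is $P_k=\lceil 96\sigma^2(k+1)^3/(\kappa L^2D_\cX^2)\rceil$; summing gives $\sum_{k=1}^N P_k=\cO(\sigma^2 N^4/(\kappa L^2D_\cX^2))$, and substituting $N^4$ yields $\cO(\kappa\sigma^2 D_\cX^2/\epsilon^2)$. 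Adding the two produces the stated IFO bound.

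For the LO complexity the dominant contribution again comes from the $\y$-maximization. By Corollary~\ref{col:istorc}, one iSTORC run to accuracy $\epsilon_{cgs}=\epsilon_k/(64\kappa)$ over $\cY$ (diameter $D_\cY$, smoothness $L$) costs $\cO(LD_\cY^2/\epsilon_{cgs})=\cO(\kappa LD_\cY^2/\epsilon_k)$ LO calls; since $\epsilon_k=\Theta(\kappa LD_\cX^2/k^3)$ this is $\cO(D_\cY^2 k^3/D_\cX^2)$ per call, so multiplying by $R=\tilde\cO(1)$ and summing $\sum_{k=1}^N k^3=\cO(N^4)$ gives $\tilde\cO(D_\cY^2 N^4/D_\cX^2)=\tilde\cO(\kappa^2L^2D_\cX^2D_\cY^2/\epsilon^2)$. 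The CndG calls driving the $\x$-update are cheaper: each run to accuracy $\zeta_k=\Theta(LD_\cX^2/k^2)$ costs $\cO(\alpha_k D_\cX^2/\zeta_k)=\cO(\kappa k)$ LO calls, for a total $\tilde\cO(\kappa N^2)=\tilde\cO(\kappa^2 LD_\cX^2/\epsilon)$, which is dominated by the first term. This gives the LO bound.

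The delicate point is the book-keeping of the inner loop in the IFO count: I must verify that, exactly as in the batch Algorithm~\ref{alg:prox}, the stochastic Prox-step terminates after only $R=\tilde\cO(1)$ rounds, so that the logarithmic factor stays hidden inside $\tilde\cO$ and does not inflate the $k^3$ and $k^4$ sums. This rests on the $(1/2)$-contraction property of the idealized update together with the fact that iSTORC produces an $\epsilon_{cgs}$-maximizer only \emph{in expectation}; carrying this expectation qualifier correctly through the summations—so that the final complexities certify the in-expectation guarantee of Theorem~\ref{thm:mpscgs}—is where the argument demands the most care.
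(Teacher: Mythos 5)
Your proposal is correct and follows essentially the same route as the paper: fix $N=\Theta\bigl(\sqrt{\kappa LD_\cX^2/\epsilon}\bigr)$ from Theorem~\ref{thm:mpscgs}, charge each Stochastic-Prox-step $R=\tilde\cO(1)$ iSTORC calls at the STORC finite-sum cost $\tilde\cO(n+\kappa^2)$ IFO and $\cO(\kappa LD_\cY^2/\epsilon_k)$ LO, add the mini-batch cost $\sum_k P_k$ and the CndG cost $\sum_k \cO(\alpha_k D_\cX^2/\zeta_k)$, and plug in the parameters. You simply carry out explicitly the summations the paper compresses into ``plugging in all parameters,'' and your identification of the dominant terms matches the stated bounds.
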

	\begin{corollary} \label{col:mpscgs-e}
		Under the assumption in Theorem \ref{thm:mpscgs} and the assumption that objective function has the expectation form of (\ref{eq:stochastic}),  Algorithm \ref{alg:mpscgs} needs $\tilde{\cO}\left(\kappa^{2.5}\sqrt{ LD_\cX^2/\epsilon}{+}(\kappa^{1.5}{+}\kappa\sigma^2)D_\cX^2/\epsilon^2\right)$  SFO complexity and  $\tilde{\cO}\left(\kappa^2L^2D_\cX^2D_\cY^2/\epsilon^2\right)$ LO complexity to achieve an $\epsilon$-saddle point.
	\end{corollary}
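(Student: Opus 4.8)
The plan is to derive the two claimed complexities in the same way as Corollary~\ref{col:csc} for the batch case, namely by multiplying the number of outer iterations guaranteed by Theorem~\ref{thm:mpscgs} by the per-iteration cost of the subroutines that Algorithm~\ref{alg:mpscgs} invokes, but now with the stochastic-gradient accounting. First I would read off the iteration count from the convergence rate: Theorem~\ref{thm:mpscgs} gives $\BE[\max_{\y\in\cY}f(\x_k,\y)-\min_{\x\in\cX}f(\x,\bar{\y}_k)]\leq 12\kappa LD_\cX^2/((k+1)(k+2))$, so forcing the right-hand side below $\epsilon$ shows that $N=\tilde{\cO}\big(\sqrt{\kappa LD_\cX^2/\epsilon}\big)$ outer iterations suffice to reach an $\epsilon$-saddle point in expectation. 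This is the $N$ that every subsequent sum runs over.

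Next I would tally the three sources of oracle calls at outer iteration $k$. (i) The inner Prox-step maximizes the $L$-smooth, $\mu$-strongly-concave function $f(\x_{r-1},\cdot)$ over $\cY$ (diameter $D_\cY$) by running iSTORC; by the contraction-mapping argument used for the batch Prox-step (the map $\psi$ is a $1/2$-contraction, cf.\ the proof of Lemma~2), the stochastic Prox-step needs only $R_k=\tilde{\cO}(1)$ inner rounds, and each iSTORC call to accuracy $\epsilon_{cgs,k}=\Theta(\epsilon_k)$ costs, by Corollary~\ref{col:istorc}, $\tilde{\cO}\big(\sqrt{\kappa}/(L\epsilon_{cgs,k})+\kappa^2\big)$ SFO and $\tilde{\cO}\big(LD_\cY^2/\epsilon_{cgs,k}\big)$ LO. (ii) Forming the stochastic partial gradient $\nabla_\x f_{\cP_k}(\z_k,\y_k)$ passed to CndG costs $P_k$ SFO. (iii) The CndG routine for the $\x$-update solves the proximal subproblem to accuracy $\zeta_k$, which by the standard Frank-Wolfe count for Algorithm~\ref{alg:cndg} takes $\tilde{\cO}(\alpha_k D_\cX^2/\zeta_k)=\tilde{\cO}(\kappa k)$ LO calls under the prescribed $\alpha_k,\zeta_k$.

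Then I would substitute the schedule $\gamma_k,\alpha_k,\zeta_k,\epsilon_k,P_k$ and sum over $k=1,\dots,N$, using $\sum_{k}k^3=\Theta(N^4)$ and $N^4=\tilde{\cO}(\kappa^2 L^2 D_\cX^4/\epsilon^2)$. On the gradient side, the $\kappa^2$ component of the iSTORC cost sums to $\tilde{\cO}(\kappa^2 N)=\tilde{\cO}\big(\kappa^{2.5}\sqrt{LD_\cX^2/\epsilon}\big)$; the geometrically growing mini-batch (variance-reduction anchor) component of iSTORC, together with the $\sqrt{\kappa}/(L\epsilon_{cgs,k})$ term, contributes the $\tilde{\cO}\big(\kappa^{1.5}D_\cX^2/\epsilon^2\big)$ piece; and $\sum_k P_k=\tilde{\cO}\big(\kappa\sigma^2 D_\cX^2/\epsilon^2\big)$ supplies the last stochastic-gradient term, which assembles into the stated SFO bound. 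On the linear-optimization side the iSTORC LO cost $\sum_k LD_\cY^2/\epsilon_{cgs,k}=\tilde{\cO}\big(\kappa^2 L^2 D_\cX^2 D_\cY^2/\epsilon^2\big)$ dominates the lower-order $\tilde{\cO}(\kappa^2 LD_\cX^2/\epsilon)$ coming from the CndG $\x$-updates, giving the claimed LO bound; since LO calls consume no samples, it coincides with the finite-sum bound of Corollary~\ref{col:mpscgs-f}.

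The main obstacle is the bookkeeping across the three nested loops. I must choose the accuracy $\epsilon_{cgs,k}$ demanded from iSTORC tightly enough that the aggregated Prox-step error still respects the $\epsilon_k$ budget that Theorem~\ref{thm:mpscgs} assumes, while keeping it loose enough that the geometrically increasing sample sizes $Q_t$ inside iSTORC sum to the advertised rate rather than blowing up the $\kappa$-powers. Getting the coupling right between $\epsilon_{cgs,k}$, the contraction factor that pins $R_k=\tilde{\cO}(1)$, and the sample schedule $P_k$—so that no single error source dominates and each stated $\kappa$-exponent falls out—is where the real care lies; once those relations are fixed, the remaining steps are routine geometric and polynomial summations.
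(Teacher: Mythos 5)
Your proposal follows essentially the same route as the paper's proof: read $N=\Theta\left(\sqrt{\kappa LD_\cX^2/\epsilon}\right)$ off Theorem \ref{thm:mpscgs}, charge each outer iteration the iSTORC cost from Corollary \ref{col:istorc} times the $\tilde{\cO}(1)$ Stochastic-Prox-step rounds, plus the $P_k$ sampled gradients and the CndG cost $\cO(\alpha_k D_\cX^2/\zeta_k)$ for the $\x$-update, and then sum over $k$ --- the paper's write-up is merely terser, deferring to the proof of Corollary \ref{col:mpscgs-f}. One remark: your step ``$\epsilon_{cgs,k}=\Theta(\epsilon_k)$'' silently drops the $1/(64\kappa)$ factor in the prescribed $\epsilon_{cgs}$, but this is precisely the same simplification the paper makes when it plugs $\epsilon_k$ (rather than $\epsilon_k/(64\kappa)$) into the $\sqrt{\kappa}/(L\epsilon)$ term of the iSTORC SFO bound, so your accounting and the paper's agree term by term.
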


	\begin{algorithm}[t] 
		\caption{Mirror-Prox Stochastic Conditional Gradient Sliding}\label{alg:mpscgs}
		{\textbf{Input:}} Objective function $f(\x,\y)$, parameters $\gamma_k$, $\alpha_k$, $N$, $\x_0$, $\y_0$, $\epsilon_k$, $P_k$ and $\zeta_k$.\\
		{\textbf{Output:}} $\x_N$, $\bar{\y}_N$.
		\begin{algorithmic}[1]
			\STATE $\v_0 \leftarrow \x_0$.
			\FOR{$k=1,2,\dots,N$}
			\STATE  $\z_k\leftarrow(1-\gamma_k)\x_{k-1}+\gamma_t\v_{k-1}$.
			\STATE Draw $P_k$ samples $\cP_k=\{\xi_{j}\}_{j=1}^{P_k}$.
			\STATE $(\x_k,\y_k,\v_k)\leftarrow\text{Stochastic-Prox-step}(f,\x_{k-1},\y_{k-1},\z_k,\v_{k-1},\gamma_k,\alpha_k,\zeta_k,\cP_k,\epsilon_k)$.
			\STATE $\bar{\y}_k\leftarrow\frac{3}{k(k+1)(k+2)}\sum_{s=1}^k s(s+1)\y_s$.
			\ENDFOR
		\end{algorithmic}
	\end{algorithm}
	
	\begin{algorithm}[t]
		\caption{Procedure   ($\x_R,\y_{R},\v_{R}$)$=$Stochastic-Prox-step($f,\x_0,\y_0,\z,\v,\gamma,\alpha,\zeta,\cP,\epsilon$)} \label{alg:sprox}
		\begin{algorithmic}[1]
			\STATE $\epsilon_{cgs}\leftarrow\frac{\epsilon}{64\kappa}$, $\epsilon_{mp}\leftarrow8\gamma^2(\frac{4\kappa L\epsilon_{cgs}}{\alpha^2}+\frac{2\zeta}{\alpha}+\frac{2\sigma^2}{|\cP|\alpha^2})$, $R\leftarrow\left\lceil\log_2\frac{4D_\cX^2}{\epsilon_{mp}}\right\rceil$.
			\FOR{$r=1,\dots,R$}
			\STATE Use iSTORC method (Algorithm \ref{alg:istorc}) with objective function $-f(\x_{r-1},\cdot)$ and start point $\y_0$ to compute $\y_r$ such that:
			$ \BE[f(\x_{r-1},\y_r)]\geq \BE[\max_{\y\in\cY}f(\x_{r-1},\y)]-\epsilon_{cgs} $
			\STATE $\v_r\leftarrow\text{CndG(}\nabla_\x f_\cP(\z,\y_r),\v,\alpha,\zeta,\cX)$,\quad $\x_r\leftarrow(1-\gamma)\x_0+\gamma\v_r$
			\ENDFOR
		\end{algorithmic}
	\end{algorithm}
	
	
	
	
	
	\section{Experiments} \label{sec:exp}
	In this section, we empirically evaluate the performance of our methods on the robust multiclass classification problem introduced in Section \ref{sec:app}. Specifically, we choose the nuclear norm ball with radius $\tau=100$ and the regularization parameter $\lambda=1/n$. We compare our methods with saddle point Frank-Wolfe (SPFW)~\cite{gidel2017frank} and stochastic variance reduce extragradient (SVRE)~\cite{chavdarova2019reducing}. SPFW is a projection-free algorithm as discussed before, while SVRE is the state-of-the-art projection-based stochastic methods for saddle point problems. We conduct experiments on three real-world data sets from the LIBSVM repository\footnote{https://www.csie.ntu.edu.tw/~cjlin/libsvmtools/datasets/}: rcv1 ($n=15,564$, $d=47,236$, $h=53$), sector ($n=6,412$, $d=55,197$, $h=105$) and news20 ($n=15,935$, $d=62,061$, $h=20$).
	
	Since the primal-dual gap is hard to compute, we evaluate algorithms by the following FW-gap~\cite{jaggi2013revisiting}:
	$$\cG(\x,\y)=\max_{\u\in\cX}\langle\x-\u, \nabla_\x f(\x,\y)\rangle+ \max_{\v\in\cY}\langle\y-\v, -\nabla_\y f(\x,\y)\rangle.$$
	which is an upper bound of primal-dual gap and easy to compute. We measure the actual running time rather than number of iterations because the computational cost of projection, linear optimization and computing gradients are quite different.
	
	We implement the mini-batch version of SVRE with batch size $100$. The learning rate of SVRE is searched in $\{10^{-1},10^{-2},\dots,10^{-6}\}$. On the other hand, the parameters of projection-free methods follows what the theory suggests. We report the experimental result in Figure \ref{fig:exp}. 
	
	In all experiments, our methods outperform baselines. The SVRE only performs a few iterations due to its heavy computational cost of the projection on to the trace norm ball. SPFW converges slowly for it does not have theoretical guarantee on the convex-strongly-concave case. We also find that MPSCGS converges faster than MPCGS, because the stochastic algorithms take advantages when $n$ is very large.
	
	\begin{figure}[ht]
		\centering 
		\begin{tabular}{ccc}
			\includegraphics[scale=0.33]{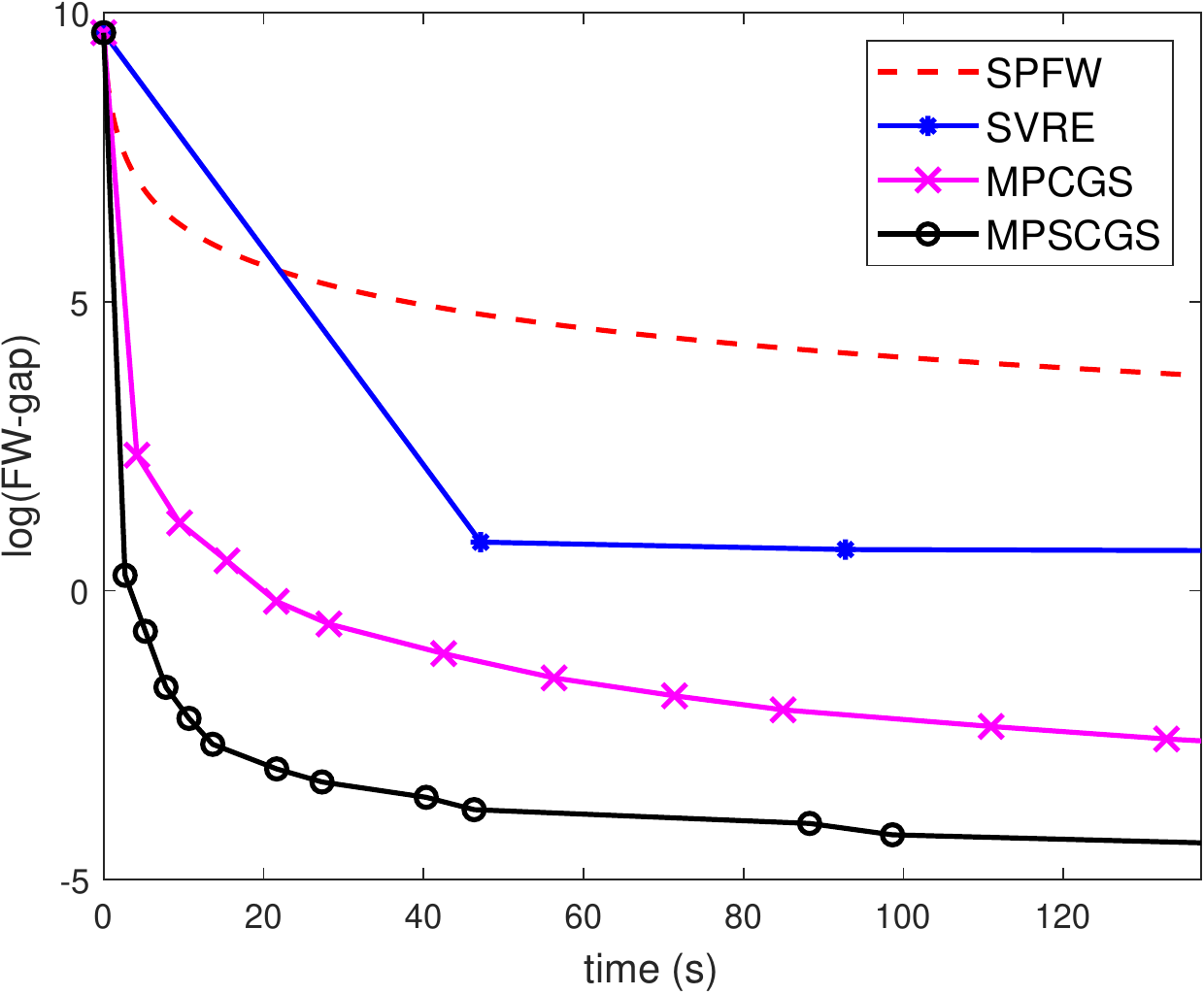} &
			\includegraphics[scale=0.33]{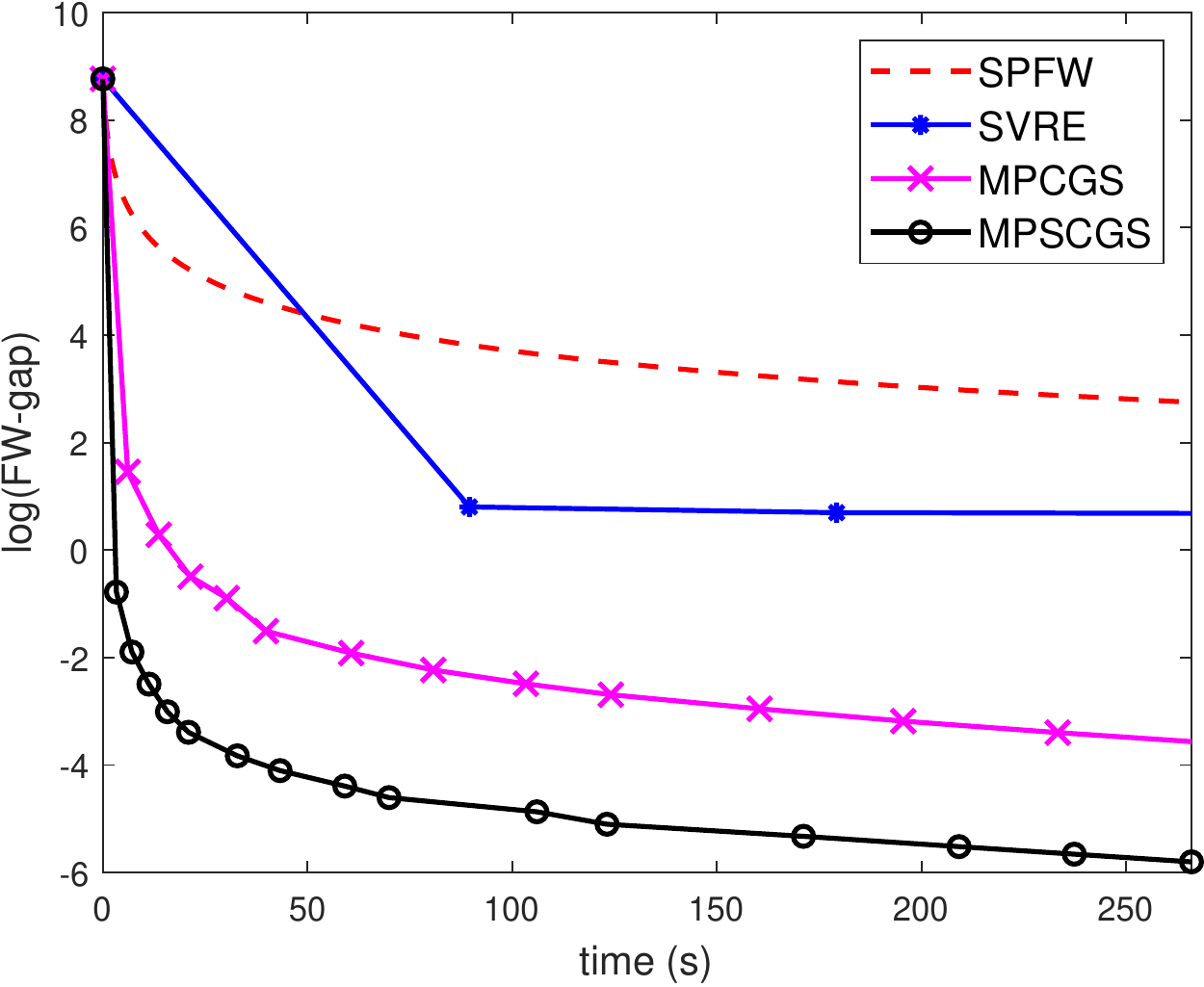} &
			\includegraphics[scale=0.33]{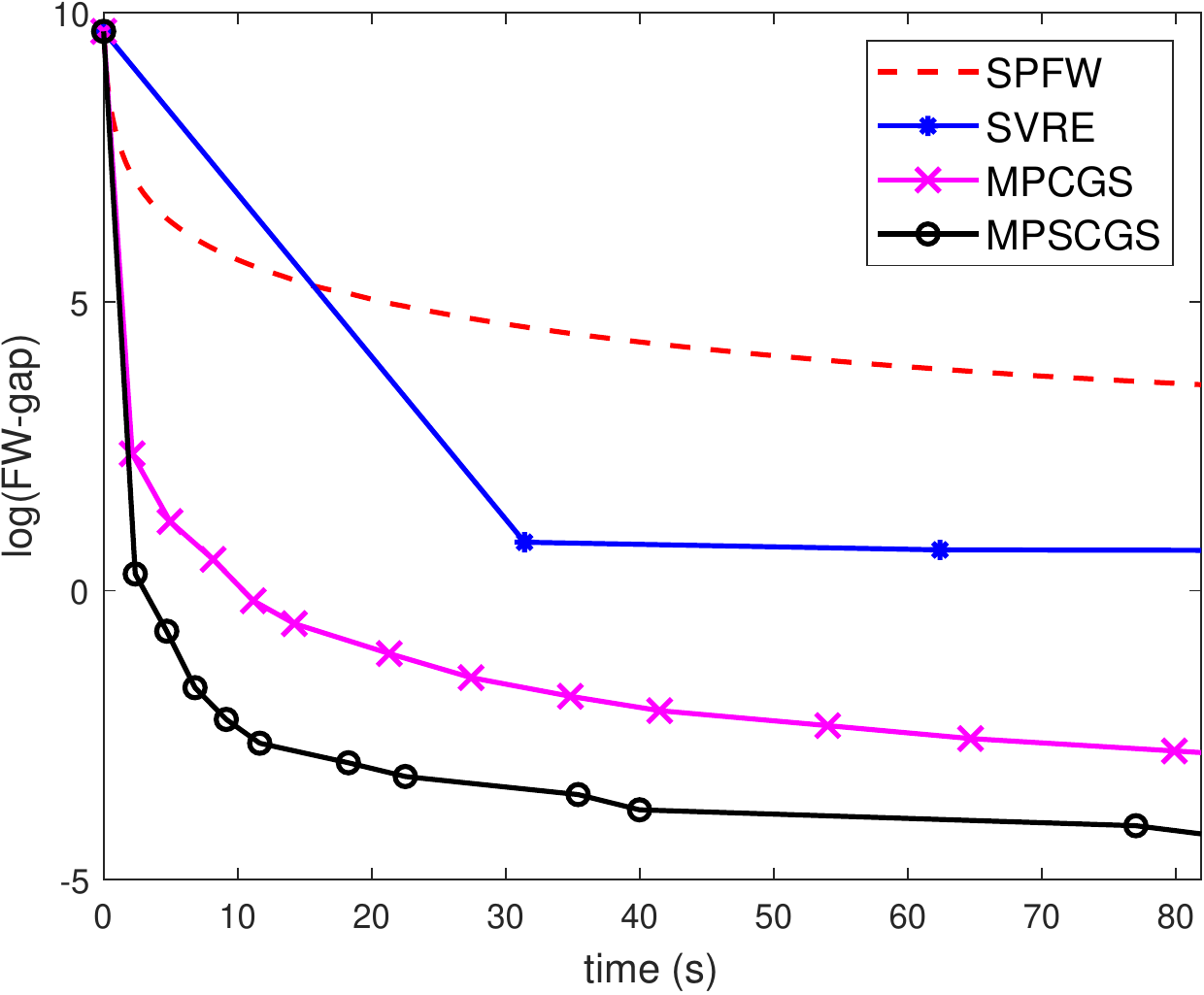} \\
			(a) rcv1 & (b) sector & (c) news20 \\[0.15cm]
		\end{tabular}
		\caption{We demonstrate the perfomance of algorithms by time (s) versus $\log(\text{FW-gap})$ for robust multiclass classification with nuclear norm ball constraint on datasets ``rcv1'', ``sector'', and ``news20''.}\label{fig:exp}
	\end{figure}

	\section{Conclusion and Future Works} \label{sec:con}
	In this paper, we propose projection-free algorithms for solving saddle point problems with complicated constraints in both batch and stochastic settings. Our methods are purely projection-free and do not require that the saddle point problem has special structures. We also provide convergence analysis for our algorithms in the convex-strongly-concave case. The experimental results demonstrate the effectiveness of our algorithms on three real world data sets. On the other hand, we believe that there is room for improving the complexity of the LO oracles, which will be our future studies. In addition, we will investigate how to extend our framework to the general convex-concave case and establish stronger convergence results in the strongly-convex-strongly-concave case.
	
	\section*{Broader Impact}
	This paper studies projection-free algorithms for convex-strongly-concave saddle point problems. From a theoretical viewpoint, our method propose the first stochastic projection-free algorithm for  saddle point problems without special conditions on the problem. From a practical viewpoint, our method can be applied to many machine learning applications which solve minimax problem with complicated constraints, e.g. robust optimization, matrix completion, two-player games and much more.
	

	\begin{ack}
		The team is supported by "New Generation of AI 2030" Major Project (2018AAA0100900) and National Natural Science Foundation of China (61702327, 61772333, 61632017). Luo Luo is supported by GRF 16201320.
	\end{ack}

	{\small\bibliographystyle{plainnat}
		\bibliography{reference}}

	\newpage
	\appendix

	\section{Proofs For MPCGS}
	In this section, we assume $f(\x,\y)$ satisfies Assumption \ref{ass1}.
	
	\subsection{Definitions and Lemmas}\label{appendix:dfn}
	We define the following functions:
	$$\y^*(\x)=\argmax_{\y\in\cY} f(\x,\y), $$
	$$\psi_k(\x)=(1-\gamma_k)\x_{k-1}+\gamma_k\cdot\argmin_{\v\in\cX}\left\{\nabla_\x f(\z_k,\y^*(\x))^\top\v+\frac{\alpha_k}{2}\|\v-\v_{k-1}\|^2\right\}. $$
	Since $f(\x,\cdot)$ is $\mu$-strongly-concave, $\y^*(\x)$ is unique. Then, we have the following two lemmas.
	
	\begin{lemma}[{\cite[Lemma 4.3]{lin2019gradient}}] \label{lem:kappa}
		$\y^*(\x)$ is $\kappa$-Lipschitz continuous.
	\end{lemma}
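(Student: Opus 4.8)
The plan is to combine the first-order optimality (variational) characterization of the constrained maximizer with the strong concavity of $f(\x,\cdot)$ and the $L$-smoothness of $f$. Fix $\x_1,\x_2\in\cX$ and abbreviate $\y_1=\y^*(\x_1)$ and $\y_2=\y^*(\x_2)$; I may assume $\y_1\neq\y_2$, since otherwise the claimed bound is trivial. Because $\y_i$ maximizes the concave function $f(\x_i,\cdot)$ over the convex set $\cY$, the variational inequality
\begin{align*}
\nabla_\y f(\x_i,\y_i)^\top(\y-\y_i)\leq 0\qquad\text{for all }\y\in\cY
\end{align*}
holds for $i=1,2$. First I would instantiate the $i=1$ inequality at $\y=\y_2$ and the $i=2$ inequality at $\y=\y_1$, then add them to obtain
\begin{align*}
\big(\nabla_\y f(\x_1,\y_1)-\nabla_\y f(\x_2,\y_2)\big)^\top(\y_2-\y_1)\leq 0.
\end{align*}

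The key manipulation is to insert the intermediate gradient $\nabla_\y f(\x_2,\y_1)$, splitting the left-hand side into a term that measures the change in $\y$ (at the fixed first argument $\x_2$) and a term that measures the change in $\x$. On the first term I would apply the strong monotonicity implied by $\mu$-strong concavity of $f(\x_2,\cdot)$, which gives $\big(\nabla_\y f(\x_2,\y_1)-\nabla_\y f(\x_2,\y_2)\big)^\top(\y_2-\y_1)\geq \mu\|\y_1-\y_2\|^2$. On the second term I would apply Cauchy--Schwarz together with the $L$-smoothness assumption in the form $\|\nabla_\y f(\x_1,\y_1)-\nabla_\y f(\x_2,\y_1)\|\leq L\|\x_1-\x_2\|$. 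Rearranging then yields
\begin{align*}
\mu\|\y_1-\y_2\|^2\leq L\|\x_1-\x_2\|\,\|\y_1-\y_2\|,
\end{align*}
and dividing by $\|\y_1-\y_2\|$ delivers $\|\y_1-\y_2\|\leq(L/\mu)\|\x_1-\x_2\|=\kappa\|\x_1-\x_2\|$, as desired.

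This is a standard sensitivity-of-argmax argument, so I do not anticipate a deep obstacle. The step that requires the most care is deriving the component-wise smoothness bound $\|\nabla_\y f(\x_1,\y_1)-\nabla_\y f(\x_2,\y_1)\|\leq L\|\x_1-\x_2\|$ from the stated joint bound on $\|\nabla f(\x_1,\y_1)-\nabla f(\x_2,\y_2)\|$: one sets the two $\y$-arguments equal so the $\y$-discrepancy vanishes, and uses that the $\y$-block of the gradient has norm no larger than the full gradient. The strong-monotonicity step likewise depends on correctly signing the strong-concavity inequality, which is the place where a sign slip would most easily creep in.
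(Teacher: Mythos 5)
Your proof is correct, and it follows essentially the same route as the paper's: the paper does not prove this lemma itself but invokes Lemma 4.3 of Lin et al.\ (2019), whose proof is exactly your argument — the two variational-inequality optimality conditions added together, strong monotonicity of $\nabla_\y f(\x_2,\cdot)$ from $\mu$-strong concavity, and the $L$-smoothness bound applied with equal $\y$-arguments so that $\|\nabla_\y f(\x_1,\y_1)-\nabla_\y f(\x_2,\y_1)\|\leq L\|\x_1-\x_2\|$. Your sign handling in the strong-concavity step and the reduction from the joint smoothness assumption to the partial-gradient bound are both handled correctly, so the proposal is a valid self-contained substitute for the citation.
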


	\begin{lemma} \label{lem:contraction}
		$\psi_k(\x)$ is a $\frac{1}{2}$-contraction.
	\end{lemma}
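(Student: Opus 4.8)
The plan is to show directly that $\psi_k$ contracts distances by a factor at most $1/2$. Since $\x_{k-1}$, $\v_{k-1}$, $\z_k$, $\gamma_k$ and $\alpha_k$ are all held fixed, the only dependence of $\psi_k(\x)$ on its argument $\x$ enters through $\y^*(\x)$. I would therefore isolate the inner minimization: writing $\g(\x)\triangleq\nabla_\x f(\z_k,\y^*(\x))$ and
\begin{align*}
T(\x)\triangleq\argmin_{\v\in\cX}\left\{\g(\x)^\top\v+\frac{\alpha_k}{2}\|\v-\v_{k-1}\|^2\right\},
\end{align*}
we have $\psi_k(\x)=(1-\gamma_k)\x_{k-1}+\gamma_k T(\x)$, and hence $\|\psi_k(\x_1)-\psi_k(\x_2)\|=\gamma_k\|T(\x_1)-T(\x_2)\|$. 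The whole task reduces to controlling the Lipschitz constant of $T$.

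First I would rewrite $T$ as a Euclidean projection. Completing the square shows that the objective equals $\frac{\alpha_k}{2}\big\|\v-(\v_{k-1}-\g(\x)/\alpha_k)\big\|^2$ up to an additive constant independent of $\v$, so $T(\x)=\mathrm{Proj}_\cX\big(\v_{k-1}-\g(\x)/\alpha_k\big)$. Because the projection onto the convex set $\cX$ is non-expansive, this yields
\begin{align*}
\|T(\x_1)-T(\x_2)\|\le\frac{1}{\alpha_k}\|\g(\x_1)-\g(\x_2)\|.
\end{align*}
Next I would bound $\|\g(\x_1)-\g(\x_2)\|$. Since both gradients are taken at the same first argument $\z_k$, the $L$-smoothness of Assumption \ref{ass1} (setting $\x_1=\x_2=\z_k$ and using that the partial-gradient norm is dominated by the full-gradient norm) gives $\|\nabla_\x f(\z_k,\y_1)-\nabla_\x f(\z_k,\y_2)\|\le L\|\y_1-\y_2\|$. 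Applying this with $\y_i=\y^*(\x_i)$ and then invoking the $\kappa$-Lipschitz continuity of $\y^*$ from Lemma \ref{lem:kappa} produces $\|\g(\x_1)-\g(\x_2)\|\le L\kappa\|\x_1-\x_2\|$.

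Combining the three estimates gives $\|\psi_k(\x_1)-\psi_k(\x_2)\|\le(\gamma_k L\kappa/\alpha_k)\,\|\x_1-\x_2\|$. Finally I would substitute the prescribed step sizes $\gamma_k=3/(k+2)$ and $\alpha_k=6\kappa L/(k+1)$ to evaluate the constant:
\begin{align*}
\frac{\gamma_k L\kappa}{\alpha_k}=\frac{3/(k+2)\cdot L\kappa}{6\kappa L/(k+1)}=\frac{k+1}{2(k+2)}\le\frac{1}{2},
\end{align*}
which establishes the claim. I do not expect a serious obstacle here: the only points requiring care are the reduction of the constrained $\argmin$ to a projection (so that non-expansiveness becomes available) and the verification that the chosen parameters make the contraction constant genuinely bounded by $1/2$ rather than merely finite. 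The existence and uniqueness of the fixed point then follow from the Banach fixed-point theorem.
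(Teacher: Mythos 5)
Your proof is correct, and its skeleton matches the paper's: both reduce the claim to the Lipschitz constant of the inner $\argmin$ as a function of the linear term, bound the gradient difference $\|\nabla_\x f(\z_k,\y^*(\x_1))-\nabla_\x f(\z_k,\y^*(\x_2))\|$ by $L\kappa\|\x_1-\x_2\|$ via smoothness and Lemma \ref{lem:kappa}, and finish by plugging in $\gamma_k$ and $\alpha_k$ to get the factor $\frac{k+1}{2(k+2)}\le\frac12$. The one genuine difference is how the $1/\alpha_k$-Lipschitz dependence of the minimizer on the gradient is established. You complete the square to identify the $\argmin$ as $\mathrm{Proj}_\cX\bigl(\v_{k-1}-\g(\x)/\alpha_k\bigr)$ and invoke the textbook non-expansiveness of Euclidean projection onto a convex set; the paper instead writes down the first-order optimality (variational) inequalities for the two minimizers, sums them to obtain $\langle\nabla_1-\nabla_2,\v_1-\v_2\rangle+\alpha_k\|\v_1-\v_2\|^2\le 0$, and applies Young's inequality — which is precisely the standard proof that proximal maps are non-expansive, carried out inline. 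The two routes give identical constants. Yours is more modular and arguably cleaner, since the projection identity makes the non-expansiveness immediate; the paper's argument is self-contained and would extend unchanged to proximal steps with a non-linear convex first term, where your completing-the-square reformulation is no longer available. Your side remarks (the partial gradient being dominated by the full gradient, and uniqueness of the fixed point via Banach) are both legitimate and consistent with how the paper uses the lemma.
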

	
	\begin{proof}
		Let 
		$$\nabla_1=\nabla_\x f(\z_k,\y^*(\x_1)), \quad \v_1=\argmin_{\v\in\cX}\left\{\nabla_1^\top\v+\frac{\alpha_k}{2}\|\v-\v_{k-1}\|^2\right\};$$ 
		$$\nabla_2=\nabla_\x f(\z_k,\y^*(\x_2)), \quad \v_2=\argmin_{\v\in\cX}\left\{\nabla_2^\top\v+\frac{\alpha_k}{2}\|\v-\v_{k-1}\|^2\right\}.$$
		Then we have
		$$\|\psi_k(\x_1)-\psi_k(\x_2)\|=\gamma_k\|\v_1-\v_2\|.$$
		According to the optimality of $\v_1$ and $\v_2$, we have
		$$\langle\nabla_1+\alpha_k(\v_1-\v_{k-1}),\v_1-\v_2\rangle\leq 0; $$
		$$\langle\nabla_2+\alpha_k(\v_2-\v_{k-1}),\v_2-\v_1\rangle\leq 0. $$
		Summing over inequalities, we get
		$$\langle\nabla_1-\nabla_2, \v_1-\v_2\rangle + \alpha_k\|\v_1-\v_2\|^2\leq 0 $$
		Thus,
		$$\|\v_1-\v_2\|^2\leq -\langle\nabla_1-\nabla_2, \v_1-\v_2\rangle/\alpha_k\leq \frac{1}{2}\|\v_1-\v_2\|^2+\frac{1}{2}\|\nabla_1-\nabla_2\|^2/\alpha_k^2,$$
		which indicates that
		$$\|\v_1-\v_2\|\leq \frac{\|\nabla_1-\nabla_2\|}{\alpha_k}\overset{\text{(a)}}{\leq} \frac{L\|\y^*(\x_1)-\y^*(\x_2)\|}{\alpha_k}\overset{\text{(b)}}{\leq} \frac{\kappa L\|\x_1-\x_2\|}{\alpha_k}$$
		Here (a) follows from the $L$-smoothness of $f$ and (b) follows from Lemma \ref{lem:kappa}.\\
		Since $\alpha_k = \frac{6\kappa_y L}{k+1}$ and $\gamma_k=\frac{3}{k+2}$, we have 
		$$\|\psi_k(\x_1)-\psi_k(\x_2)\|\leq \frac{\gamma_k\kappa L\|\x_1-\x_2\|}{\alpha_k}\leq \frac{1}{2}\|\x_1-\x_2\|.$$
	\end{proof}

	\begin{lemma}
		Assume the input parameters of the procedure Prox-step (Algorithm \ref{alg:prox}) is choosed as follows:
		$$\gamma=\frac{3}{k+2},\ \  \alpha = \frac{6\kappa L}{k+1}, \ \  \zeta = \frac{LD_\cX^2}{384k(k+1)}, \ \  \epsilon=\frac{\kappa LD_\cX^2}{k(k+1)(k+2)}.$$
		Then the Prox-step returns $(\x_R,\y_R,\v_R)$ which satisfies  $$f(\x_R,\y_R)\geq \max_{\y\in\cY}f(\x_R,\y)-\epsilon.$$
	\end{lemma}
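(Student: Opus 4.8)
The plan is to view the inner loop of Prox-step as an inexact fixed-point iteration for the map $\psi_k$, and then to transfer the quality of $\y_R$ as a maximizer from the stale point $\x_{R-1}$ to the output $\x_R$. Write $\Phi(\x)=\max_{\y\in\cY}f(\x,\y)=f(\x,\y^*(\x))$, which is convex with $\nabla\Phi(\x)=\nabla_\x f(\x,\y^*(\x))$ by Danskin's theorem, and let $\hat\x\in\cX$ be the unique fixed point of the $(1/2)$-contraction $\psi_k$ guaranteed by Lemma~\ref{lem:contraction}. The update $\x_r=(1-\gamma)\x_0+\gamma\v_r$ is precisely $\psi_k(\x_{r-1})$ up to two errors: the CndG oracle returns $\v_r$ only to Frank--Wolfe gap $\zeta$, and $\y_r$ replaces $\y^*(\x_{r-1})$ only to accuracy $\epsilon_{cgs}$ in objective value.

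First I would quantify the per-step error $\delta=\|\x_r-\psi_k(\x_{r-1})\|=\gamma\|\v_r-\v_r^*\|$, where $\v_r^*$ is the exact minimizer using $\y^*(\x_{r-1})$. Since the inner objective is $\alpha$-strongly convex, a gap of $\zeta$ forces $\|\v_r-\bar\v_r\|\le\sqrt{2\zeta/\alpha}$ for the exact minimizer $\bar\v_r$ using $\y_r$; repeating the optimality-condition argument of Lemma~\ref{lem:contraction} together with $L$-smoothness and the strong-concavity bound $\|\y_r-\y^*(\x_{r-1})\|\le\sqrt{2\epsilon_{cgs}/\mu}$ gives $\|\bar\v_r-\v_r^*\|\le\sqrt{2\kappa L\epsilon_{cgs}}/\alpha$. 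Summing and using $a+b\le\sqrt2\sqrt{a^2+b^2}$ shows $\delta\le\tfrac{\sqrt2}{4}\epsilon_{mp}$. The contraction then yields $\|\x_r-\hat\x\|\le\tfrac12\|\x_{r-1}-\hat\x\|+\delta$, so unrolling with $\|\x_0-\hat\x\|\le D_\cX$ gives $\|\x_R-\hat\x\|\le 2^{-R}D_\cX+2\delta$. The choice $R=\lceil\log_2(4D_\cX/\epsilon_{mp})\rceil$ makes $2^{-R}D_\cX\le\epsilon_{mp}/4$, so both $\|\x_R-\hat\x\|$ and $\|\x_{R-1}-\hat\x\|$ are $O(\epsilon_{mp})$, whence $\|\x_R-\x_{R-1}\|=O(\epsilon_{mp})$.

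The crux is the transfer step, and here the naive route fails: under the constraint $\cY$ the suboptimality gap is only lower-bounded by $\tfrac\mu2\|\y-\y^*\|^2$, not upper-bounded by $\tfrac L2\|\y-\y^*\|^2$, so one cannot simply convert the distance $\|\y_R-\y^*(\x_R)\|$ back into a function-value gap. Instead I would keep function-value gaps throughout, decomposing
$$\Phi(\x_R)-f(\x_R,\y_R)=\big[\Phi(\x_R)-\Phi(\x_{R-1})\big]+\big[\Phi(\x_{R-1})-f(\x_{R-1},\y_R)\big]+\big[f(\x_{R-1},\y_R)-f(\x_R,\y_R)\big].$$
The middle bracket is at most $\epsilon_{cgs}$ by the CGS guarantee. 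Bounding the first bracket by convexity of $\Phi$ and the third by convexity of $f(\cdot,\y_R)$, the two first-order terms combine into the single inner product $\langle\nabla_\x f(\x_R,\y^*(\x_R))-\nabla_\x f(\x_{R-1},\y_R),\,\x_R-\x_{R-1}\rangle$; $L$-smoothness together with $\|\y^*(\x_R)-\y_R\|\le\kappa\|\x_R-\x_{R-1}\|+\sqrt{2\epsilon_{cgs}/\mu}$ (Lemma~\ref{lem:kappa} plus strong concavity) controls this by $O\!\big(L\kappa\|\x_R-\x_{R-1}\|^2+\sqrt{\kappa L\epsilon_{cgs}}\,\|\x_R-\x_{R-1}\|\big)$. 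Substituting $\|\x_R-\x_{R-1}\|=O(\epsilon_{mp})$ and then the explicit values of $\gamma,\alpha,\zeta,\epsilon_{cgs}$ reduces the right-hand side to a sum of terms each bounded by a constant multiple of $\epsilon$, and verifying the constants closes the proof. I expect this final constant-tracking, in particular ensuring the combined gradient cross-term stays below $\epsilon$, to be the only delicate part.
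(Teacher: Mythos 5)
Your fixed-point half of the argument is essentially the paper's: the paper also treats the inner loop as an inexact iteration of the contraction $\psi_k$ (Lemma \ref{lem:contraction}), bounds the per-step error by $\gamma\|\v_r-\v^*_{r-1}\|\le\gamma\sqrt{2\kappa L\epsilon_{cgs}/\alpha^2+2\zeta/\alpha}=\epsilon_{mp}/4$ (your two-stage split through the exact minimizer $\bar\v_r$ for $\y_r$ loses only a $\sqrt2$), and unrolls the recursion with the same choice of $R$ to get $\|\x_{R-1}-\tilde\x\|\le\epsilon_{mp}$. Where you genuinely depart is the transfer step, and your instinct there is correct. The paper bounds the distance $\|\y_R-\y^*(\x_R)\|$ by a triangle inequality through $\y^*(\tilde\x)$ and $\y^*(\x_{R-1})$ (Lemma \ref{lem:kappa} plus strong concavity), and then converts distance back into a value gap by writing $f(\x_R,\y_R)\ge f(\x_R,\y^*(\x_R))+\langle\nabla_\y f(\x_R,\y^*(\x_R)),\y_R-\y^*(\x_R)\rangle-\frac{L}{2}\|\y_R-\y^*(\x_R)\|^2$ and dropping the inner product ``by optimality.'' For a constrained maximizer this is backwards: first-order optimality gives $\langle\nabla_\y f(\x_R,\y^*(\x_R)),\y-\y^*(\x_R)\rangle\le 0$ for all $\y\in\cY$, so the dropped term is nonpositive and the paper's final inequality is unjustified whenever $\y^*(\x_R)$ lies on the boundary of $\cY$ (which is the typical situation, e.g.\ for the simplex constraint of Section \ref{sec:app}). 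Your three-bracket decomposition of $\Phi(\x_R)-f(\x_R,\y_R)$, with the two convexity terms merged into the single cross inner product $\langle\nabla_\x f(\x_R,\y^*(\x_R))-\nabla_\x f(\x_{R-1},\y_R),\x_R-\x_{R-1}\rangle$, stays in function values throughout, needs no bound on gradient norms, and keeps the error second order in $\epsilon_{mp}$; it repairs the step rather than reproducing its flaw.

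The one thing you have not closed is the constant tracking you defer to the end, and with the lemma's stated parameters it does not close as written. Plugging in gives $\epsilon_{mp}^2\le D_\cX^2/\bigl(4\kappa k(k+2)^2\bigr)$ and $\|\x_R-\x_{R-1}\|\lesssim 2\epsilon_{mp}$, so your quadratic term $L(1+\kappa)\|\x_R-\x_{R-1}\|^2$ is of order $LD_\cX^2/\bigl(k(k+2)^2\bigr)$, i.e.\ roughly $(C/\kappa)\,\epsilon$ with $C$ between $1.5$ and $2.5$ depending on which per-step constant you carry; for $\kappa$ near $1$ this single term already exceeds $\epsilon$, and the cross term contributes another $\approx 0.2\,\epsilon$. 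So your route, as sketched, proves the lemma with $\epsilon$ replaced by $c\epsilon$ for a small absolute constant $c$, or proves it verbatim after shrinking the constants in $\zeta$ and $\epsilon_{cgs}$ (equivalently $\epsilon_{mp}$ and $R$) by a constant factor --- a change that only propagates into the constant $11$ of Theorem \ref{thm:csc} and leaves all complexity claims intact. You should make that adjustment explicit rather than leaving it as ``verifying the constants.''
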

	
	\begin{proof}
		Let $\psi(\x)=(1-\gamma)\x_0+\gamma \argmin_{\u\in\cX}\{\nabla_\x f(\z,\y^*(\x_r))^\top\u+\frac{\alpha}{2}\|\v-\u\|^2\}$.
		According to Lemma \ref{lem:contraction}, $\psi(\x)$ is a $\frac{1}{2}$-contraction.
		
		By the optimality, we get
		\begin{equation} \label{eq:6}
		\langle\nabla_\x f(\z,\y^*(\x_{r-1}))+\alpha(\v^*_{r-1}-\v),\v^*_{r-1}-\v_r\rangle\leq 0.
		\end{equation}
		Since $\v_r=\text{CndG}(\nabla_\x f(\z,\y_r),\v,\alpha,\zeta,\cX)$, we have
		\begin{equation} \label{eq:7}
		\langle\nabla_\x f(\z,\y_r)+\alpha(\v_r-\v,\v_r-\v^*_{r-1}\rangle\leq \zeta.
		\end{equation}
		Sum Eq.(\ref{eq:6}) and Eq.(\ref{eq:7}) together, we have
		$$\langle \nabla_\x f(\z,\y^*(\x_{r-1}))-\nabla_\x f(\z,\y_r),\v^*_{r-1}-\v_r\rangle+\alpha \|\v^*_{r-1}-\v_r\|^2\leq\zeta. $$
		Thus, we can get
		\begin{equation*}
		\begin{split}
		\|\v^*_{r-1}-\v_r\|^2\leq& \frac{\zeta}{\alpha} - \frac{1}{\alpha}\langle \nabla_\x f(\z,\y^*(\x_{r-1}))-\nabla_\x f(\z,\y_r),\v^*_{r-1}-\v_r\rangle \\
		\leq& \frac{\zeta}{\alpha}+\frac{1}{2}\|\v^*_{r-1}-\v_r\|^2+\frac{\|\nabla_\x f(\z,\y^*(\x_{r-1}))-\nabla_\x f(\z,\y_r)\|^2}{2\alpha^2},
		\end{split}
		\end{equation*}
		which means 
		$$\|\v^*_{r-1}-\v_r\|^2 \leq \frac{2\zeta}{\alpha}+\frac{\|\nabla_\x f(\z,\y^*(\x_{r-1}))-\nabla_\x f(\z,\y_r)\|^2}{\alpha^2}.$$
		By the $L$-smoothness of $f$, we have
		\begin{equation} \label{eq:8}
		\begin{split}
		\|\v^*_{r-1}-\v_r\|\leq& \sqrt{\frac{L^2}{\alpha^2}\|\y^*(\x_{r-1})-\y_r\|^2+\frac{2\zeta}{\alpha}}\\
		\overset{\text{(a)}}{\leq}& \sqrt{\frac{2\kappa L}{\alpha^2}(f(\x_{r-1},\y^*(
			x_{r-1}))-f(\x_{r-1},\y_r))+\frac{2\zeta}{\alpha}}\\
		\overset{\text{(b)}}{\leq}& \sqrt{\frac{2\kappa L\epsilon_{cgs}}{\alpha^2}+\frac{2\zeta}{\alpha}}
		\end{split}
		\end{equation}
		where (a) is by the $\mu$-strongly-concavity of $f(\x,\cdot)$ and (b) is by the stopping condition of CGS.\\
		Assume the fix point of $\psi(\cdot)$ is $\tilde{\x}$. Then we bound  $\|\x_r-\tilde{\x}\|$ as follows:
		\begin{equation*}
		\begin{split}
		\|\x_r-\tilde{\x}\|=& \|(1-\gamma)\x_{k-1}+\gamma\v_r-\psi(\tilde{\x})\|\\
		\leq& \|\psi(\x_{r-1})-\psi(\tilde{\x})\|+\|(1-\gamma)\x_{k-1}+\gamma\v_{r}-(1-\gamma)\x_{k-1}-\gamma\v^*_{r-1}\|\\
		\leq& \frac{1}{2}\|\x_{r-1}-\tilde{\x}\|+\gamma\|\v_{r}-\v^*_{r-1}\|\\
		\leq& \frac{1}{2}\|\x_{r-1}-\tilde{\x}\|+\frac{\epsilon_{mp}}{4}\\
		\leq& 2^{-r}\|\x_0-\tilde{\x}\|+\frac{\epsilon_{mp}}{2}
		\end{split}
		\end{equation*}
		where the second inequality follows from Lemma \ref{lem:contraction}.
		Since $R=\left\lceil\log_2\frac{4D_\cX}{\epsilon_{mp}} \right\rceil$, we know that 
		$$\|\x_{R-1}-\tilde{\x}\|\leq 2\cdot 2^{-R}\|\x_0-\tilde{\x}\|+\frac{\epsilon_{mp}}{2}\leq 2\cdot\frac{\epsilon_{mp}}{4}+\frac{\epsilon_{mp}}{2}= \epsilon_{mp}. $$
		Then, we can get
		\begin{equation*}
		\begin{split}
		\|\y_R-\y^*(\x_{R})\|\leq& \|\y^*(\x_{R})-\y^*(\tilde{\x})\|+\|\y^*(\x_{R-1})-\y^*(\tilde{\x})\|+\|\y^*(\x_{R-1})-\y_R\|\\
		\leq& \kappa(\|\x_{R}-\tilde{\x}\|+\|\x_{R-1}-\tilde{\x}\|)+\sqrt{\frac{2}{\mu}\epsilon_{cgs}}\\
		\leq& 2\kappa\epsilon_{mp}+\sqrt{\frac{2}{\mu}\epsilon_{cgs}}\\
		=& 8\kappa\gamma_k\sqrt{\frac{2\kappa L\epsilon_{cgs}}{\alpha_k^2}+\frac{2\zeta_k}{\alpha_k}}+\sqrt{\frac{2}{\mu}\epsilon_{cgs}}
		\end{split}
		\end{equation*}
		
		where the second inequality comes from Lemma \ref{lem:kappa} and the concavity of $f(\x,\cdot)$.
		According to the value of input parameters, we can get
		$$\|\y_R-\y^*(\x_{R})\| \leq \sqrt{\frac{\kappa D_\cX^2}{k(k+1)(k+2)}}+\sqrt{\frac{\kappa D_\cX^2}{32k(k+1)(k+2)}}\leq \sqrt{\frac{2\kappa D_\cX^2}{k(k+1)(k+2)}}. $$
		
		By the $L$-smoothness of $f$ and the optimality of $\y^*(\x_R)$, we have
		\begin{equation*}
		\begin{split}
		f(\x_R,\y_R)\geq& f(\x_R,\y^*(\x_R))+\langle\nabla_\y f(\x_R,\y^*(\x_R)),\y_R-\y^*(\x_R)\rangle-\frac{L}{2} \|\y_R-\y^*(\x_{R})\|^2\\
		\geq& f(\x_R,\y^*(\x_R))-\frac{L}{2} \|\y_R-\y^*(\x_{R})\|^2\\
		\geq& f(\x_R,\y^*(\x_R))-\frac{\kappa L D_\cX^2}{k(k+1)(k+2)}=f(\x_R,\y^*(\x_R))-\epsilon.
		\end{split}
		\end{equation*}
		
	\end{proof}
	In addition, by the updating formula of Prox-step, it is obvious that the Line 4 of Algorithm \ref{alg:mpcgs} ensures that
	$\v_k= \mathrm{CndG}(\nabla_\x f(\z_k,\y_k),\v_{k-1},\alpha_k,\zeta_k,\cX)$ and $\x_k=(1-\gamma_k)\x_{k-1}+\gamma_k\v_k$.

	\subsection{Proof of Theorem \ref{thm:csc}}
	\begin{proof}
		According to the smoothness, for any $\tilde{\x}\in\cX$, we have
		{\small\begin{equation}
			\begin{split}\label{eq:1}
			& f(\x_k,\y_k) \\
			\leq  &  f(\z_k,\y_k)+\nabla_\x f(\z_k,\y_k)^\top(\x_k-\z_k)+ \frac{L}{2}\|\x_k-\z_k\|^2\\
			=& (1-\gamma_k)( f(\z_k,\y_k)+\nabla_\x f(\z_k,\y_k)^\top(\x_{k-1}-\z_k))+\gamma_k(f(\z_k,\y_k)+\nabla_\x f(\z_k,\y_k)^\top(\tilde{\x}-\z_k))\\
			& +\gamma_k\nabla_\x f(\z_k,\y_k)^\top(\v_k-\tilde{\x})+\frac{\gamma_k^2L}{2}\|\v_k-\v_{k-1}\|^2\\
			\leq& (1-\gamma_k) f(\x_{k-1},\y_k)+\gamma_k f(\tilde{\x},\y_k)+\gamma_k\nabla_\x f(\z_k,\y_k)^\top(\v_k-\tilde{\x})+\frac{\gamma_k^2L}{2}\|\v_k-\v_{k-1}\|^2.
			\end{split}
			\end{equation}}
		
		where the last inequality comes from the convexity of $f(\cdot,\y_k)$. Notice that the update of $\v_k$ and the stopping condition of CndG procedure ensures that
		
		\begin{equation} \label{eq:2}
		\max_{\x\in\cX} \langle \nabla_\x f(\z_k,\y_k)+\alpha_k(\v_k-\v_{k-1}), \v_k-\x \rangle\leq \zeta_k.
		\end{equation} 
		
		Combining Eq.(\ref{eq:1}) and (\ref{eq:2}) we can get
		{\small\begin{equation}
			\begin{split}
			& f(\x_k,\y_k)-f(\tilde{\x},\y_k)\\
			\leq& (1-\gamma_k)(f(\x_{k-1},\y_k)-f(\tilde{\x},\y_k))+\gamma_k\zeta_{k}-\gamma_k \alpha_k(\v_k-\v_{k-1})^\top(\v_k-\tilde{\x})+\frac{\gamma_k^2L}{2}\|\v_k-\v_{k-1}\|^2\\
			=& (1-\gamma_k)(f(\x_{k-1},\y_k)-f(\tilde{\x},\y_k))+\gamma_k\zeta_{k}+\frac{\gamma_k \alpha_k}{2}\left(\|\v_{k-1}-\tilde{\x}\|^2-\|\v_k-\tilde{\x}\|^2\right)\\
			&+\frac{\gamma_k}{2}(L\gamma_k- \alpha_k)\|\v_k-\v_{k-1}\|^2.
			\end{split}
			\end{equation}}

		Let $\Phi(k)=k(k+1)(k+2)(f(\x_k,\y_k)-f(\tilde{\x},\y_k))$. According to line \ref{line:mpcgs-sub} of Algorithm \ref{alg:mpcgs}, we have $$f(\x_k,\y_k)\geq \max_{\y\in\cY}f(\x_k,\y)-\epsilon_k.$$
		Thus, we have
		{\small\begin{equation} \label{eq:5}
			\begin{split}
			\Phi(k)\leq& \Phi(k-1)+k(k-1)(k+1)(f(\x_{k-1},\y_k)-f(\x_{k-1},\y_{k-1})-f(\tilde{\x},\y_k)+f(\tilde{\x},\y_{k-1}))\\
			&+k(k+1)(k+2)\left(\gamma_k\eta_{t}+\frac{\gamma_k \alpha_k}{2}(\|\v_{k-1}-\tilde{\x}\|^2-\|\v_k-\tilde{\x}\|^2)\right)\\
			\leq& \Phi(k-1)+k(k-1)(k+1)(\epsilon_{k-1}-f(\tilde{\x},\y_k)+f(\tilde{\x},\y_{k-1}))\\
			&+k(k+1)(k+2)\left(\gamma_k\zeta_k+\frac{\gamma_k \alpha_k}{2}(\|\v_{k-1}-\tilde{\x}\|^2-\|\v_k-\tilde{\x}\|^2)\right)\\
			\leq& \Phi(0)+\sum_{s=1}^k s(s-1)(s+1)\epsilon_{s-1}-\sum_{s=1}^k s(s-1)(s+1)(f(\tilde{\x},\y_s)-f(\tilde{\x},\y_{s-1}))\\
			&+ \sum_{s=1}^k s(s+1)(s+2)\left(\gamma_s\zeta_{s}+\frac{\gamma_s \alpha_s}{2}(\|\v_{s-1}-\tilde{\x}\|^2-\|\v_s-\tilde{\x}\|^2)\right)
			\end{split}
			\end{equation}}
		Notice that 
		\begin{equation} \label{eq:3}
		\sum_{s=1}^k s(s+1)(s+2)\gamma_s\zeta_{s}=\frac{1}{128}k LD_\cX^2
		\end{equation}
		and
		\begin{equation} \label{eq:4}
		\begin{split}
		&\sum_{s=1}^k s(s+1)(s+2)\frac{\gamma_s \alpha_s}{2}(\|\v_{s-1}-\tilde{\x}\|^2-\|\v_s-\tilde{\x}\|^2)\\
		=& 9\kappa L\sum_{s=1}^k s(\|\v_{s-1}-\tilde{\x}\|^2-\|\v_s-\tilde{\x}\|^2)\\
		\leq& 9\kappa L\sum_{s=0}^{k-1} \|\v_s-\tilde{\x}\|^2\leq 9k\kappa LD_\cX^2.
		\end{split}
		\end{equation}
		Substituting  Eq.(\ref{eq:3}) and Eq.(\ref{eq:4}) into Eq.(\ref{eq:5}) and using the fact that $\Phi(0)=0$, we have
		
		\begin{equation*}
		\Phi(k)\leq \sum_{s=1}^k s(s-1)(s+1)\epsilon_{s-1}-\sum_{s=1}^k s(s-1)(s+1)(f(\tilde{\x},\y_s)-f(\tilde{\x},\y_{s-1}))+ 10k\kappa LD_\cX^2.
		\end{equation*}
		
		Thus, for any $\tilde{\y}\in\cY$, we have
		
		\begin{equation*}
		\begin{split}
		& \sum_{s=1}^k s(s-1)(s+1)\epsilon_{s-1} + 10k\kappa LD_\cX^2\\
		\geq& \Phi(k)+\sum_{s=1}^k s(s-1)(s+1)(f(\tilde{\x},\y_s)-f(\tilde{\x},\y_{s-1}))\\
		\geq& \Phi(k)-3\sum_{s=1}^{k-1} s(s+1)f(\tilde{\x},\y_s)+k(k-1)(k+1)f(\tilde{\x},\y_k)\\
		=& k(k+1)(k+2)f(\x_k,\y_k)-3\sum_{s=1}^{k} s(s+1)f(\tilde{\x},\y_s)\\
		\overset{\text{(a)}}{\geq}& k(k+1)(k+2)f[f(\x_k,\y_k)-f(\tilde{\x},\bar{\y}_k)]\\
		\geq& k(k+1)(k+2)[f(\x_k,\tilde{\y})-f(\tilde{\x},\bar{\y}_k)-\epsilon_k]
		\end{split}
		\end{equation*}
		where (a) is by the concavity of $f(\x,\cdot)$ and the definition of $\bar{\y}_k$. Then, we can obtain the bound of the primal-dual gap:
		\begin{equation*}
		f(\x_k,\tilde{\y})-f(\tilde{\x},\bar{\y}_k)\leq \frac{1}{k(k+1)(k+2)}\left(\sum_{s=1}^k s(s+1)(s+2)\epsilon_s+10k\kappa LD_\cX^2  \right)\leq \frac{11\kappa LD_\cX^2}{(k+1)(k+2)}
		\end{equation*}
		where the last equation comes from the fact that $\epsilon_s=\frac{\kappa LD_\cX^2}{s(s+1)(s+2)}$.
		
	\end{proof}
	
	\subsection{Proof of Corollary \ref{col:csc}}
	According to Theorem 2.5 of \cite{lan2016conditional}, the CGS method requires  $\cO\left(\sqrt{\kappa}\log\frac{\kappa LD_\cY^2}{\epsilon_{k}}\right)$ FO calls and $\cO\left(\frac{\kappa LD_\cY^2}{\epsilon_{k}}\right)$ LO calls, where $R$ is the number of iteration of the Prox-step procedure. Thus, the FO and LO complexity of MPCGS are respectively 
	$\cO\left(\sum_{k=1}^N\sum_{r=1}^R \sqrt{\kappa}\log\frac{\kappa LD_\cY^2}{\epsilon_k}\right) $ and
	$\cO\left(\sum_{k=1}^N\sum_{r=1}^R \left(\frac{\kappa LD_\cY^2}{\epsilon_k}+\frac{\alpha_k D_\cX^2}{\zeta_k}\right)\right)$.
	Theorem \ref{thm:csc} implies that $N$ should be the order $\Theta\left(\sqrt{\frac{\kappa LD_\cX^2}{\epsilon}}\right)$. Plugging in all parameters obtains the complexity of MPCGS.

	\section{Proof of Inexact STORC Algorithm}
	In this section, we provide the details for the analysis of iSTORC (Algorithm \ref{alg:istorc}). We suppose that the objective function $h(\x)$ satisfies assumptions in Section 4.1. We define $D_t=\sqrt{\frac{\kappa D^2}{2^{t-1}}}$ for any $t\in[N]$.

	\subsection{Technical Lemmas}
	\begin{lemma}[Lemma 3 of \cite{hazan2016variance}] \label{lem:istorc}
		At the $t$-th outer iteration of iSTORC (Algorithm \ref{alg:istorc}), we suppose that $\BE[\|\x_0-\x^*\|]\leq D_t^2$. Then for any $k$, we have
		$$ \BE[h(\x_k)-h(\x^*)]\leq \frac{8LD_t^2}{k(k+1)} $$
		if $\BE[\|\r_s-\nabla h(\w_s)\|^2]\leq \frac{L^2D_t^2}{Ms(s+1)}$ for all $s\leq k$.
	\end{lemma}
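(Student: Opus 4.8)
The plan is to run the weighted telescoping analysis of conditional gradient sliding, treating $\r_k$ as a noisy surrogate for $\nabla h(\w_k)$ and absorbing the noise into the prox geometry. Fix $\x=\x^*$ throughout. First I would start from $L$-smoothness, $h(\x_k)\le h(\w_k)+\langle\nabla h(\w_k),\x_k-\w_k\rangle+\frac{L}{2}\|\x_k-\w_k\|^2$, and use the two ways of writing the displacement, namely $\x_k-\w_k=\lambda_k(\u_k-\u_{k-1})$ and $\x_k-\w_k=(1-\lambda_k)(\x_{k-1}-\w_k)+\lambda_k(\u_k-\w_k)$. Applying convexity of $h$ to the linear terms at $\x_{k-1}$ and at $\x^*$ converts this into
\begin{equation*}
h(\x_k)\le(1-\lambda_k)h(\x_{k-1})+\lambda_k h(\x^*)+\lambda_k\langle\nabla h(\w_k),\u_k-\x^*\rangle+\tfrac{L\lambda_k^2}{2}\|\u_k-\u_{k-1}\|^2.
\end{equation*}

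Next I would exploit the CndG stopping guarantee, which certifies $\langle\r_k+\beta_k(\u_k-\u_{k-1}),\u_k-\x^*\rangle\le\eta_{t,k}$. Writing $\delta_k:=\nabla h(\w_k)-\r_k$ and splitting $\langle\nabla h(\w_k),\u_k-\x^*\rangle=\langle\r_k,\u_k-\x^*\rangle+\langle\delta_k,\u_k-\x^*\rangle$, the stopping condition together with the identity $\langle\u_k-\u_{k-1},\u_k-\x^*\rangle=\frac{1}{2}(\|\u_k-\u_{k-1}\|^2+\|\u_k-\x^*\|^2-\|\u_{k-1}-\x^*\|^2)$ produces the telescoping prox pair $\frac{\lambda_k\beta_k}{2}(\|\u_{k-1}-\x^*\|^2-\|\u_k-\x^*\|^2)$ along with the favorable curvature term $\frac{\lambda_k}{2}(L\lambda_k-\beta_k)\|\u_k-\u_{k-1}\|^2\le 0$, since the parameter choice ensures $\beta_k\ge L\lambda_k$.

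The heart of the argument, and the step I expect to be the main obstacle, is controlling the noise inner product $\lambda_k\langle\delta_k,\u_k-\x^*\rangle$. I would decompose it as $\lambda_k\langle\delta_k,\u_k-\u_{k-1}\rangle+\lambda_k\langle\delta_k,\u_{k-1}-\x^*\rangle$. The first piece I absorb by Young's inequality against the available curvature, leaving a residual of order $\frac{\lambda_k}{2(\beta_k-L\lambda_k)}\|\delta_k\|^2$ whose expectation is bounded by the hypothesis $\BE\|\delta_k\|^2\le\frac{L^2 D_t^2}{Mk(k+1)}$. The second piece is where the variance-reduced structure is essential: since $\u_{k-1}$ and $\x^*$ are measurable with respect to the history $\cF_{k-1}$ while $\r_k$ is conditionally unbiased for $\nabla h(\w_k)$, we get $\BE[\langle\delta_k,\u_{k-1}-\x^*\rangle\mid\cF_{k-1}]=0$, so this term vanishes in expectation. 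A diameter bound on $\u_{k-1}-\x^*$ would instead yield a contribution that does not decay fast enough, so conditional unbiasedness, rather than the second-moment bound alone, is what drives the claimed rate.

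Finally I would multiply the resulting one-step recursion by the weights $w_k=k(k+1)$, which satisfy $w_k(1-\lambda_k)=w_{k-1}$ and $w_0=0$, and sum from $1$ to $k$. The prox terms telescope to $3L(\|\u_0-\x^*\|^2-\|\u_k-\x^*\|^2)$, bounded in expectation by $3LD_t^2$ through the initial condition $\BE\|\x_0-\x^*\|^2\le D_t^2$; the $\eta_{t,k}$ terms and the residual variance terms each sum to a multiple of $LD_t^2$ once I substitute $\lambda_k=2/(k+1)$, $\beta_k=3L/k$, $\eta_{t,k}=\kappa LD^2/(2^{t-2}Mk)$ and use $D_t^2=\kappa D^2/2^{t-1}$ together with $k\le M$. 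Dividing by $w_k=k(k+1)$ and tracking the constants (namely $3$ from the prox terms, $4$ from $\eta_{t,k}$, and $1$ from the variance) gives exactly $\BE[h(\x_k)-h(\x^*)]\le\frac{8LD_t^2}{k(k+1)}$. I expect this final constant-chasing to be routine once the parameter substitutions are in place.
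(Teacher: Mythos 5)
Your proof strategy is the standard weighted CGS recursion, and it is in fact the argument behind the result being invoked: the paper gives no proof of this lemma at all (it imports it as Lemma 3 of \cite{hazan2016variance}), and your one-step recursion, prox telescoping, and constant accounting are all sound --- indeed $k(k+1)\cdot\tfrac{\lambda_k\beta_k}{2}=3L$ makes the prox terms telescope exactly, the $\eta_{t,k}$ terms sum to $4LD_t^2$ using $k\le M$, and the Young residual sums to $LD_t^2$, giving the constant $8$.

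However, there is a genuine gap at precisely the step you call the heart of the argument. You assert that $\r_k$ is conditionally unbiased for $\nabla h(\w_k)$ given $\cF_{k-1}$, so that $\BE[\langle\delta_k,\u_{k-1}-\x^*\rangle\mid\cF_{k-1}]=0$. For the iSTORC estimator this is false when $k\ge 2$. Since $\r_k=\nabla h_{\cS_{t,k}}(\w_k)-\nabla h_{\cS_{t,k}}(\x_0)+\nabla h_{\cQ_t}(\x_0)$, and $\cF_{k-1}$ must contain $\sigma(\cQ_t)$ for $\u_{k-1}$ to be $\cF_{k-1}$-measurable (which your argument also needs), one gets $\BE[\delta_k\mid\cF_{k-1}]=\nabla h(\x_0)-\nabla h_{\cQ_t}(\x_0)\neq 0$: the anchor-gradient error is frozen for the whole epoch and is correlated with $\u_{k-1}$, which depends on $\cQ_t$ through $\r_1,\dots,\r_{k-1}$. (It vanishes only in the finite-sum STORC setting, where the snapshot gradient is exact; for $k=1$ the term is also harmless since $\u_0=\x_0$ is independent of $\cQ_t$.) Only the $\cS_{t,k}$-part of $\delta_k$ is a martingale difference. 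Your own remark about the diameter bound shows why this cannot be patched under the lemma's stated hypothesis: a bias enters linearly (times the diameter) rather than through its second moment, and with $Q_t\sim 2^{t-1}$ the resulting term of order $\sigma D/\sqrt{Q_t}$ decays like $2^{-t/2}$, too slowly for the claimed $2^{-t}$ recursion. So what you have proved is the lemma under an additional conditional-unbiasedness hypothesis --- which is the hypothesis under which Lemma 3 of \cite{hazan2016variance} is actually established, but which the statement here omits and which the iSTORC estimator violates. Closing the gap requires treating the anchor error separately (e.g., conditioning on $\cQ_t$ and analyzing the epoch as SCGS applied to the tilted objective $h(\cdot)-\langle\nabla h(\x_0)-\nabla h_{\cQ_t}(\x_0),\cdot\rangle$, then paying an additive $\BE\|\nabla h(\x_0)-\nabla h_{\cQ_t}(\x_0)\|\cdot D$ term), and this costs larger $Q_t$ than the paper prescribes; this defect is inherited from the paper itself, but as a proof of the statement as given, your martingale step is where the argument breaks.
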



	\begin{lemma} \label{lem:var}
		For iSTORC, we have
		$$\BE[\|\nabla h_{\cQ_t}(\x)-\nabla h(\x)\|^2]\leq \frac{L^2D^2_t}{4M^2(M+1)}.$$
	\end{lemma}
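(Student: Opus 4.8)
The plan is to recognize $\nabla h_{\cQ_t}(\x)$ as the empirical average of $Q_t$ independent, unbiased estimates of $\nabla h(\x)$, and to bound its variance by the standard mini-batch argument before plugging in the prescribed $Q_t$. First I would write
\begin{equation*}
\nabla h_{\cQ_t}(\x)-\nabla h(\x)=\frac{1}{Q_t}\sum_{j=1}^{Q_t}\bigl(\nabla H(\x;\xi_j)-\nabla h(\x)\bigr),
\end{equation*}
where $\{\xi_j\}_{j=1}^{Q_t}=\cQ_t$ are the independent samples drawn in Line 3 of Algorithm \ref{alg:istorc}. By the first assumption of Section 4.1 each summand $\nabla H(\x;\xi_j)-\nabla h(\x)$ has zero mean, so by independence every cross term vanishes when the squared norm is expanded, leaving only the diagonal terms:
\begin{equation*}
\BE\bigl[\|\nabla h_{\cQ_t}(\x)-\nabla h(\x)\|^2\bigr]=\frac{1}{Q_t^2}\sum_{j=1}^{Q_t}\BE\bigl[\|\nabla H(\x;\xi_j)-\nabla h(\x)\|^2\bigr].
\end{equation*}
The second assumption bounds each of the $Q_t$ terms by $\sigma^2$, which yields the clean intermediate estimate $\BE[\|\nabla h_{\cQ_t}(\x)-\nabla h(\x)\|^2]\le \sigma^2/Q_t$.

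It then remains to check that the chosen batch size makes $\sigma^2/Q_t$ no larger than the claimed right-hand side. Using $Q_t\ge \frac{1200\cdot 2^{t-1}\sigma^2\sqrt{\kappa}}{L^2D^2}$ gives $\sigma^2/Q_t\le \frac{L^2D^2}{1200\cdot 2^{t-1}\sqrt{\kappa}}$, and substituting $D_t^2=\kappa D^2/2^{t-1}$ into the target turns the desired inequality into $4M^2(M+1)\le 1200\,\kappa^{3/2}$ after cancelling the common factor $L^2D^2/2^{t-1}$. With $M=\lceil 4\sqrt{2\kappa}\rceil$ and $\kappa\ge 1$ this is a direct numerical verification: since $M\le 4\sqrt{2\kappa}+1$, the left side is of order $32\kappa\cdot 4\sqrt{2\kappa}\approx 181\,\kappa^{3/2}$ for large $\kappa$, comfortably below $1200\,\kappa^{3/2}$, and the boundary case $\kappa=1$ (where $M=6$, so $4M^2(M+1)=1008\le 1200$) is checked by hand.

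The conceptually central step is the variance decomposition via independence, which is entirely standard; the only place requiring care is the constant bookkeeping in the final inequality, where the ceiling in $M=\lceil 4\sqrt{2\kappa}\rceil$ must be handled so that $4M^2(M+1)\le 1200\,\kappa^{3/2}$ holds uniformly for all $\kappa\ge 1$ rather than only asymptotically. I expect this constant-tracking to be the main (if routine) obstacle, and I would dispatch it by bounding $M\le 4\sqrt{2\kappa}+1$ and verifying the resulting polynomial inequality in $\sqrt{\kappa}$ over $[1,\infty)$.
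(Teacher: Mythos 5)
Your proof takes exactly the route of the paper: the paper's entire argument for this lemma is the two-inequality chain $\BE[\|\nabla h_{\cQ_t}(\x)-\nabla h(\x)\|^2]\le \sigma^2/Q_t\le L^2D_t^2/(4M^2(M+1))$, and you have simply filled in the standard independence/unbiasedness computation behind the first inequality and the parameter substitution behind the second. The mini-batch variance argument and the reduction to $4M^2(M+1)\le 1200\,\kappa^{3/2}$ are both correct.

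The constant bookkeeping at the end, however, has two problems. First, an arithmetic slip: the quantity to bound is $4M^2(M+1)$, not $M^2(M+1)$; with $M\approx 4\sqrt{2\kappa}$ this is roughly $724\,\kappa^{3/2}$, not $181\,\kappa^{3/2}$ (still below $1200\,\kappa^{3/2}$, so the asymptotic conclusion survives). Second, and more substantively, your claim that $4M^2(M+1)\le 1200\,\kappa^{3/2}$ holds uniformly over $\kappa\ge 1$ is false, because of the ceiling in $M=\lceil 4\sqrt{2\kappa}\rceil$: for $\kappa$ slightly above $9/8$ one has $M=7$, so $4M^2(M+1)=1568$ while $1200\,\kappa^{3/2}\approx 1432$; similarly for $\kappa$ slightly above $49/32$ one has $M=8$, giving $2304$ versus roughly $2274$. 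Your proposed dispatch --- bounding $M\le 4\sqrt{2\kappa}+1$ and verifying the resulting polynomial inequality on $[1,\infty)$ --- cannot close this, since that polynomial inequality already fails at $\kappa=1$ (it gives about $1357>1200$ there); your separate hand-check at $\kappa=1$ works only because the actual $M=6$ is smaller than the bound. To be fair, this defect is inherited from the paper, whose one-line proof asserts the same inequality with no verification at all; it is a constant-level issue, repaired by enlarging the constant $1200$ in $Q_t$ (e.g.\ to $2400$), but as written neither your verification nor the paper's assertion is literally correct for all $\kappa\ge 1$.
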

	\begin{proof}
		In  expectation setting, we have
		$$\BE[\|\nabla h_{\cQ_t}(\x)-\nabla h(\x)\|^2]\leq \frac{\sigma^2}{Q_t}\leq \frac{L^2D^2_t}{4M^2(M+1)}.$$
	\end{proof}
	
	\subsection{Proof of Theorem \ref{thm:istorc}}
	\begin{proof}
		We prove by this theorem by induction.
		For $t=0$, by the smoothness and the convexity of $h$, we have
		$$h(\bar{\x}_0)\leq h(\x^*)+\nabla h(\x^*)^\top(\bar{\x}_0-\x^*)+\frac{L}{2}\|\bar{\x}_0-\x^*\|^2\leq h(\x^*)+\frac{LD^2}{2}.$$
		Then, we suppose $\BE[h(\bar{\x}_{t-1})-h(\x^*)]\leq \frac{LD^2}{2^t}$ and consider the $t$-th outer iteration.\\
		By the strong convexity and the inductive assumption, we have
		$$\BE[\|\x_0-\x^*\|^2]\leq \frac{2}{\mu}\BE[h(\bar{\x}_{t-1})-h(\x^*)]\leq \frac{LD^2}{\mu 2^{t-1}}=D_t^2. $$
		
		Now we use another induction on the inner iteration to show that it holds
		$\BE[h(\x_k)-h(\x^*)]\leq \frac{8LD_t^2}{k(k+1)} $
		for any $1\leq k \leq M$.\\
		\\
		For $k=1$, by the fact that $\w_1=\x_0$ and Lemma \ref{lem:var},  we have
		$$\BE[\|\r_1-\nabla h(\w_1)\|^2]= \BE[\|\nabla h_{\cQ_t}(\x_0)-\nabla h(\x_0)\|^2]\leq \frac{L^2D_t^2}{4M}$$
		for both finite-sum setting and expectation setting. Then, by Lemma \ref{lem:istorc} we can obtain
		$$\BE[h(\x_1)-h(\x^*)]\leq 4LD_t^2. $$
		
		Now we suppose $\BE[h(\x_s)-h(\x^*)]\leq \frac{8LD_t^2}{s(s+1)} $ for $s<k$ where $k\geq 2$. We consider the case $s=k$.
		
		Since  the variance of a variable is less than its second-order moment, we can get
		{\small\begin{equation} \label{eq:9}
			\begin{split}
			& \BE_{\cS_{t,s},\cQ_t}[\|\r_s-\nabla h(\w_s)\|^2]\\
			=& \BE_{\cS_{t,s},\cQ_t}[\|\nabla h_{\cS_{t,s}}(\w_s)-\nabla h_{\cS_{t,s}}(\x_0)+\nabla h_{\cQ_t}(\x_0)-\nabla h(\w_s)\|^2]\\
			\leq& 2\BE_{\cS_{t,s}}[\|\nabla h_{\cS_{t,s}}(\w_s)-\nabla h_{\cS_{t,s}}(\x_0)-(\nabla h(\w_s)-\nabla h(\x_0))\|^2]+2\BE_{\cQ_t}[\|\nabla h_{\cQ_t}(\x_0)-\nabla h(\x_0)\|^2]\\
			\leq& 2\BE_{\cS_{t,s}}[\|\nabla h_{\cS_{t,s}}(\w_s)-\nabla h_{\cS_{t,s}}(\x_0)\|^2]+2\BE_{\cQ_t}[\|\nabla h_{\cQ_t}(\x_0)-\nabla h(\x_0)\|^2]\\
			\leq& \frac{2L^2}{S}\|\w_s-\x_0\|^2+2\BE_{\cQ_t}[\|\nabla h_{\cQ_t}(\x_0)-\nabla h(\x_0)\|^2].
			\end{split}
			\end{equation}}
		Since 
		$ \|\w_s-\x_0\|^2\leq 2\|\w_s-\x^*\|^2 + 2\|\x_0-\x^*\|^2$, then we bound $\BE[\|\x_0-\x^*\|^2]$ and $\BE[\|\w_s-\x^*\|^2]$ separately.
		
		For $\BE[\|\x_0-\x^*\|^2]$, we have
		\begin{equation} \label{eq:15}
		\BE[\|\x_0-\x^*\|^2]\leq D_t^2=\frac{2\kappa D_t^2}{2\kappa}\leq \frac{64\kappa D_t^2}{M(M+1)}.   
		\end{equation}

		Since $\w_s=(1-\lambda_s)\x_{s-1}+\lambda_s\u_{s-1}$ and $\x_{s-1}=(1-\lambda_{s-1})\x_{s-2}+\lambda_{s-1}\u_{s-1}$, we have
		$$\w_s=\frac{\lambda_{s-1}+\lambda_s-\lambda_{s-1}\lambda_s}{\lambda_{s-1}}\x_{s-1}-\frac{\lambda_s-\lambda_{s-1}\lambda_s}{\lambda_{s-1}}\x_{s-2}$$
		Thus, we have
		\begin{equation} \label{eq:14}
		\begin{split}
		\|\w_s-\x^*\|^2=& \left\|\frac{\lambda_{s-1}+\lambda_s-\lambda_{s-1}\lambda_s}{\lambda_{s-1}}(\x_{s-1}-\x^*)-\frac{\lambda_s-\lambda_{s-1}\lambda_s}{\lambda_{s-1}}(\x_{s-2}-\x^*)\right\|^2\\
		\leq& 2\left\|\frac{\lambda_{s-1}+\lambda_s-\lambda_{s-1}\lambda_s}{\lambda_{s-1}}(\x_{s-1}-\x^*)\right\|^2+2\left\|\frac{\lambda_s-\lambda_{s-1}\lambda_s}{\lambda_{s-1}}(\x_{s-2}-\x^*)\right\|^2\\
		\leq& 8\|\x_{s-1}-\x^*\|^2+2\|\x_{s-2}-\x^*\|^2\\
		\end{split}
		\end{equation}
		where the last inequality comes from the fact $\lambda_s\leq\lambda_{s-1}\leq 1$.
		
		If $s=2$, we can obtain 
		\begin{equation*}
		\begin{split}
		L^2\BE[\|\w_2-\x_0\|^2]\leq& L^2\BE[16\|\x_1-\x^*\|^2+6\|\x_0-\x^*\|^2]\\
		\leq& 32\kappa L\BE[h(\x_1)-h(\x^*)]+\frac{384\kappa L^2D_t^2}{(M+1)^2}\\
		\leq& \frac{768\kappa L^2D_t^2}{s(s+1)} + \frac{384\kappa L^2D_t^2}{s(s+1)}= \frac{1152\kappa L^2D_t^2}{s(s+1)}
		\end{split}
		\end{equation*}
		where the second inequality comes from the strong convexity of $h(\x)$ and Eq. (\ref{eq:15}), the third inequality comes from the induction hypothesis.
		
		If $s\geq 3$, we can obtain
		\begin{equation} \label{eq:16}
		\begin{split}
		L^2\BE[\|\w_s-\x^*\|^2]\leq& 8L^2\BE[\|\x_{s-1}-\x^*\|^2]+2L^2\BE[\|\x_{s-2}-\x^*\|^2]\\
		\leq& 16\kappa L\BE[h(\x_{s-1})-h(\x^*)]+4\kappa L\BE[h(\x_{s-2})-h(\x^*)]\\
		\leq& \frac{128\kappa L^2D_t^2}{s(s-1)}+\frac{32\kappa L^2D_t^2}{(s-2)(s-1)}\leq \frac{448\kappa L^2D_t^2}{s(s+1)}
		\end{split}
		\end{equation}
		which indicates that  
		$$L^2\BE[\|\w_s-\x_0\|^2]\leq L^2\BE[2\|\w_s-\x^*\|^2 + 2\|\x_0-\x^*\|^2]\leq \frac{1024\kappa L^2D_t^2}{s(s+1)} $$
		where the last inequality is due to Eq.(\ref{eq:15}) and Eq.(\ref{eq:16}).
		Substituting all these pieces into Eq.(\ref{eq:9}), we can obtain
		\begin{equation*}
		\begin{split}
		\BE_{\cS_{t,s},\cQ_t}[\|\r_s-\nabla h(\w_s)\|^2] \leq&  \frac{2L^2}{S}\|\w_s-\x_0\|^2+2\BE_{\cQ_t}[\|\nabla h_{\cQ_t}(\x_0)-\nabla h(\x_0)\|^2]\\
		\leq& \frac{2400\kappa L^2D_t^2}{s(s+1)S}+\frac{2\sigma^2}{Q_t}\\
		\leq& \frac{L^2D_t^2}{2Ms(s+1)}+\frac{L^2D_t^2}{2Ms(s+1)}=\frac{L^2D_t^2}{Ms(s+1)}
		\end{split}
		\end{equation*}
		where the last inequality comes from the fact that $s\leq M$.
		By Lemma \ref{lem:istorc}, we know that $\BE[h(\x_s)-h(\x^*)]\leq \frac{8LD_t^2}{s(s+1)}$, which completes that induction.
	\end{proof}
	
	\subsection{Proof of Corollary \ref{col:istorc}}
	\begin{proof}
		Theorem \ref{thm:istorc} implies that $N$ should be the order $\Theta\left(\log_2{\frac{LD^2}{\epsilon}}\right)$. Thus, the SFO complexity of iSTORC is $$\cO\left(\sum_{t=1}^N\left(Q_t+\sum_{k=1}^M S\right)\right)=\cO\left(\left(\frac{\sqrt{\kappa}}{L\epsilon}+\kappa^2\right)\log_2\left(\frac{LD^2}{\epsilon}\right)\right).$$ Since the CndG procedure requires $\cO\left(\frac{\beta_kD^2}{\eta_{t,k}}\right)$ iterations, the LO complexity of iSTORC is $\cO\left(\frac{LD^2}{\epsilon}\right)$.
	\end{proof}
	
	\section{Proof for MPSCGS}
	In this section, we assume $f(\x,\y)$ satisfies Assumption \ref{ass1} and Assumption \ref{ass2}. We also use the same definition of $\y^*(\cdot)$ and $\psi_k(\cdot)$ as in Section \ref{appendix:dfn}. We denote $\sigma_k^2=\frac{\sigma^2}{P_k}$ additionally.

	\subsection{Notation and Lemma}
	
	\begin{lemma} \label{lem:sprox}
		We set the input of Stochastic-Prox-step (Algorithm \ref{alg:sprox}) as
		$$\gamma=\frac{3}{k+2},\ \  \alpha = \frac{6\kappa L}{k+1}, \ \  \zeta = \frac{LD_\cX^2}{576k(k+1)}, \ \  \epsilon=\frac{\kappa LD_\cX^2}{k(k+1)(k+2)},\ \  P=\left\lceil\frac{96\sigma^2(k+1)^3}{\kappa L^2D_\cX^2} \right\rceil, $$
		then the output $(\x_R,\y_R,\v_R)$ satisfies  $$\BE[f(\x_R,\y_R)]\geq \BE\left[\max_{\y\in\cY}f(\x_R,\y)\right]-\epsilon.$$
	\end{lemma}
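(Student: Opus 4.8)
The plan is to mirror the argument used for the batch Prox-step lemma, upgrading every deterministic estimate to an estimate in expectation and carrying the extra variance term produced by the mini-batch gradient $\nabla_\x f_\cP$. As there, let $\tilde{\x}$ denote the unique fixed point of the idealized map $\psi$; recall from Lemma \ref{lem:contraction} that $\psi$ is a $\tfrac12$-contraction and from Lemma \ref{lem:kappa} that $\y^*(\cdot)$ is $\kappa$-Lipschitz. The quantity to control is $\BE\|\y_R-\y^*(\x_R)\|^2$, since $L$-smoothness and optimality of $\y^*(\x_R)$ give $\BE[f(\x_R,\y_R)]\ge \BE[f(\x_R,\y^*(\x_R))]-\tfrac{L}{2}\BE\|\y_R-\y^*(\x_R)\|^2$, so it suffices to prove $\BE\|\y_R-\y^*(\x_R)\|^2\le \tfrac{2\kappa D_\cX^2}{k(k+1)(k+2)}$, which forces the penalty below $\epsilon$.

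First I would bound $\BE\|\v^*_{r-1}-\v_r\|^2$, where $\v^*_{r-1}$ is the exact proximal minimizer built from $\nabla_\x f(\z,\y^*(\x_{r-1}))$. Summing the optimality inequality for $\v^*_{r-1}$ with the $\zeta$-approximate CndG stopping inequality for $\v_r$ (now at the stochastic gradient $\nabla_\x f_\cP(\z,\y_r)$) and using Young's inequality yields $\BE\|\v^*_{r-1}-\v_r\|^2\le \tfrac{2\zeta}{\alpha}+\tfrac{1}{\alpha^2}\BE\|\nabla_\x f(\z,\y^*(\x_{r-1}))-\nabla_\x f_\cP(\z,\y_r)\|^2$. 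Splitting the gradient difference into a maximizer-error part and a sampling-noise part via $\|a+b\|^2\le 2\|a\|^2+2\|b\|^2$, then combining $L$-smoothness with $\mu$-strong concavity (so $\BE\|\y^*(\x_{r-1})-\y_r\|^2\le\tfrac{2}{\mu}\epsilon_{cgs}$ from the iSTORC guarantee of Theorem \ref{thm:istorc}) and the mini-batch variance bound $\BE\|\nabla_\x f_\cP(\z,\y_r)-\nabla_\x f(\z,\y_r)\|^2\le\sigma^2/|\cP|$ (as in Lemma \ref{lem:var}), gives exactly $\BE\|\v^*_{r-1}-\v_r\|^2\le \tfrac{2\zeta}{\alpha}+\tfrac{4\kappa L\epsilon_{cgs}}{\alpha^2}+\tfrac{2\sigma^2}{|\cP|\alpha^2}=\tfrac{\epsilon_{mp}}{8\gamma^2}$, the stochastic analogue of Eq.~(\ref{eq:8}) that motivates the definition of $\epsilon_{mp}$ in Algorithm \ref{alg:sprox}.

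Next I would run the contraction recursion in squared form. From $\|\x_r-\tilde{\x}\|\le\tfrac12\|\x_{r-1}-\tilde{\x}\|+\gamma\|\v_r-\v^*_{r-1}\|$ and the elementary inequality $(\tfrac12 a+b)^2\le\tfrac12 a^2+2b^2$, I get $\BE\|\x_r-\tilde{\x}\|^2\le\tfrac12\BE\|\x_{r-1}-\tilde{\x}\|^2+2\gamma^2\BE\|\v_r-\v^*_{r-1}\|^2\le\tfrac12\BE\|\x_{r-1}-\tilde{\x}\|^2+\tfrac{\epsilon_{mp}}{4}$. Unrolling the geometric series with $\BE\|\x_0-\tilde{\x}\|^2\le D_\cX^2$ and $R=\lceil\log_2(4D_\cX^2/\epsilon_{mp})\rceil$ yields $\BE\|\x_{R}-\tilde{\x}\|^2\le\epsilon_{mp}$ and $\BE\|\x_{R-1}-\tilde{\x}\|^2\le\epsilon_{mp}$. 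Working with the squared recursion here (rather than the norm recursion of the batch proof) is essential, since the closing smoothness step needs squared-distance control and Jensen would give it only in the wrong direction.

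Finally I would close on $\y$: inserting $\y^*(\tilde{\x})$ and using the triangle inequality, $\|\y_R-\y^*(\x_R)\|\le\kappa\|\x_R-\tilde{\x}\|+\kappa\|\x_{R-1}-\tilde{\x}\|+\|\y^*(\x_{R-1})-\y_R\|$; squaring with $(a+b+c)^2\le3(a^2+b^2+c^2)$, taking expectations, and using $\BE\|\y^*(\x_{R-1})-\y_R\|^2\le\tfrac{2}{\mu}\epsilon_{cgs}$ gives $\BE\|\y_R-\y^*(\x_R)\|^2\le 6\kappa^2\epsilon_{mp}+\tfrac{6}{\mu}\epsilon_{cgs}$. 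Substituting the stated $\gamma,\alpha,\zeta$, $\epsilon_{cgs}=\tfrac{\epsilon}{64\kappa}$ and $|\cP|=P$ into $\epsilon_{mp}$ and simplifying term by term, each contribution is a constant multiple of $\tfrac{\kappa D_\cX^2}{k(k+1)(k+2)}$ with the constants bounded by $\tfrac{43}{32}<2$, so the target bound holds and the claim follows. I expect the main obstacle to be the expectation bookkeeping around the reused batch $\cP$: since the same $\cP$ serves every inner iteration $r$ while $\y_r$ depends on earlier iterates, one must justify the variance bound $\BE\|\nabla_\x f_\cP(\z,\y_r)-\nabla_\x f(\z,\y_r)\|^2\le\sigma^2/|\cP|$ with care (by conditioning or a uniform-in-$\y$ argument) and check that taking expectations preserves each pointwise inequality; the remaining constant-chasing is routine but must be done precisely to keep the final coefficient below $2$.
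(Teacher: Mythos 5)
Your proposal follows essentially the same route as the paper's own proof of this lemma: the same reduction via $L$-smoothness and optimality of $\y^*(\x_R)$ to bounding $\BE\|\y_R-\y^*(\x_R)\|^2$, the same optimality-plus-CndG argument giving $\BE\|\v^*_{r-1}-\v_r\|^2\le \frac{4\kappa L\epsilon_{cgs}}{\alpha^2}+\frac{2\zeta}{\alpha}+\frac{2\sigma^2}{|\cP|\alpha^2}=\frac{\epsilon_{mp}}{8\gamma^2}$, the same squared contraction recursion with $R=\lceil\log_2(4D_\cX^2/\epsilon_{mp})\rceil$, and the same triangle-inequality finish. If anything, your bookkeeping is slightly more careful than the paper's: you correctly carry the $\kappa^2$ factor from squaring the Lipschitz bound on $\y^*$ (the paper's display writes $3\kappa$ and $6\kappa$ there, an apparent typo, though the final bound survives either way, as your $43/32<2$ constant check confirms), and the dependence of $\y_r$ on the reused batch $\cP$ that you flag is a genuine subtlety that the paper's proof passes over silently.
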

	
	\begin{proof}
		Let $\psi(\x)=(1-\gamma)\x_0+\gamma \argmin_{\u\in\cX}\left\{\nabla_\x f(\z,\y^*(\x_r))^\top\u+\frac{\alpha}{2}\|\v-\u\|^2\right\}$.
		According to Lemma \ref{lem:contraction}, $\psi(\x)$ is a $\frac{1}{2}$-contraction.
		Let $\v^*_r = \argmin_{\u\in\cX}\{\nabla_\x f(\z,\y^*(\x_r))^\top\u+\frac{\alpha}{2}\|\v-\u\|^2\}$, then $$\psi(\x_r)=(1-\gamma)\x_0+\gamma\v^*_r.$$
		
		By the optimality, we get
		\begin{equation} \label{eq:12}
		\langle\nabla_\x f(\z,\y^*(\x_{r-1}))+\alpha(\v^*_{r-1}-\v),\v^*_{r-1}-\v_r\rangle\leq 0.
		\end{equation}
		Since $\v_r=\text{CndG}(\nabla_\x f_{\cP}(\z,\y_r),\v,\alpha,\zeta,\cX)$, we have
		\begin{equation} \label{eq:13}
		\langle\nabla_\x f(\z,\y_r)+\alpha(\v_r-\v),\v_r-\v^*_{r-1}\rangle\leq \zeta
		\end{equation}
		Sum Eq.(\ref{eq:12}) and Eq.(\ref{eq:13}) together, we have
		$$\langle \nabla_\x f(\z,\y^*(\x_{r-1}))-\nabla_\x f_{\cP}(\z,\y_r),\v^*_{r-1}-\v_r\rangle+\alpha \|\v^*_{r-1}-\v_r\|^2\leq\zeta $$
		Thus, we can get
		\begin{equation*}
		\begin{split}
		\BE[\|\v^*_{r-1}-\v_r\|^2]\leq& \frac{\zeta}{\alpha}-\frac{1}{\alpha}\BE[\langle \nabla_\x f(\z,\y^*(\x_{r-1}))-\nabla_\x f_{\cP}(\z,\y_r),\v^*_{r-1}-\v_r\rangle]\\
		\leq& \frac{\zeta}{\alpha}+\frac{1}{2}\BE[\|\v^*_{r-1}-\v_r\|^2]+\frac{\BE[\|\nabla_\x f(\z,\y^*(\x_{r-1}))-\nabla_\x f_{\cP}(\z,\y_r)\|^2]}{2\alpha^2}
		\end{split}
		\end{equation*}
		which indicates
		\begin{equation*}
		\begin{split}
		&\BE[\|\v^*_{r-1}-\v_r\|^2]\\
		\leq& \frac{\BE[\|\nabla_\x f(\z,\y^*(\x_{r-1}))-\nabla_\x f_{\cP}(\z,\y_r)\|^2]}{\alpha^2}+\frac{2\zeta}{\alpha}\\
		\leq& \frac{\BE[2\|\nabla_\x f(\z,\y^*(\x_{r-1}))-\nabla_\x f(\z,\y_r)\|^2+2\|\nabla_\x f(\z,\y_r)-\nabla_\x f_{\cP}(\z,\y_r)\|^2]}{\alpha^2}+\frac{2\zeta}{\alpha}
		\end{split}
		\end{equation*}
		Denote $\widehat{\sigma}^2=\frac{\sigma^2}{P}$. By the $L$-smoothness of $f$, we can obtain
		
		\begin{equation*}
		\begin{split}
		\BE[\|\v^*_{r-1}-\v_r\|^2]\leq& \frac{2L^2}{\alpha^2}\BE[\|\y^*(\x_{r-1})-\y_r\|^2]+\frac{2\zeta}{\alpha}+\frac{2\widehat{\sigma}^2}{\alpha^2}\\
		\leq& \frac{4\kappa L}{\alpha^2}\BE[f(\x_{r-1},\y^*(
		x_{r-1}))-f_{\cP}(\x_{r-1},\y_r)]+\frac{2\zeta}{\alpha}+\frac{2\widehat{\sigma}^2}{\alpha^2}\\
		\leq& \frac{4\kappa L\epsilon_{cgs}}{\alpha^2}+\frac{2\zeta}{\alpha}+\frac{2\widehat{\sigma}^2}{\alpha^2}
		\end{split}
		\end{equation*}
		where we use the $\mu$-strongly-concavity of $f(\x,\cdot)$ and  the stopping condition of iSTORC. Then we bound $\BE[\|\x_r-\tilde{\x}\|^2]$ as follows:
		\begin{equation*}
		\begin{split}
		\BE[\|\x_r-\tilde{\x}\|^2]=& \BE[\|(1-\gamma)\z+\gamma\v_r-\psi(\tilde{\x})\|^2]\\
		\leq& 2\BE[\|\psi(\x_{r-1})-\psi(\tilde{\x})\|^2]+2\BE[\|(1-\gamma)\z+\gamma\v_{r}-(1-\gamma)\z-\gamma\v^*_{r-1}\|^2]\\
		\leq& \frac{1}{2}\BE[\|\x_{r-1}-\tilde{\x}\|^2]+2\gamma^2\BE[\|\v_{r}-\v^*_{r-1}\|^2]\\
		\leq& \frac{1}{2}\BE[\|\x_{r-1}-\tilde{\x}\|^2]+\frac{\epsilon_{mp}}{4}\\
		\leq& 2^{-r}\BE[\|\x_0-\tilde{\x}\|^2]+\frac{\epsilon_{mp}}{2}
		\end{split}
		\end{equation*}
		Since $R=\left\lceil\log_2\left(\frac{4D_\cX^2}{\epsilon_{mp}}\right) \right\rceil$, we know that 
		$$\BE[\|\x_{R-1}-\tilde{\x}\|^2]\leq 2\cdot 2^{-R}\BE[\|\x_0-\tilde{\x}\|^2]+\frac{\epsilon_{mp}}{2}\leq 2\cdot\frac{\epsilon_{mp}}{4}+\frac{\epsilon_{mp}}{2}= \epsilon_{mp} $$
		Then, we can get
		\begin{equation*}
		\begin{split}
		\BE[\|\y_R-\y^*(\x_{R})\|^2]\leq& 3\BE\left[\|\y^*(\x_{R})-\y^*(\tilde{\x})\|^2+\|\y^*(\x_{R-1})-\y^*(\tilde{\x})\|^2+\|\y^*(\x_{R-1})-\y_R\|^2\right]\\
		\leq& 3\kappa\left(\BE\left[\|\x_{R}-\tilde{\x}\|^2+\|\x_{R-1}-\tilde{\x}\|^2\right]\right)+\frac{6}{\mu}\epsilon_{cgs}\\
		\leq& 6\kappa\epsilon_{mp}+\frac{6}{\mu}\epsilon_{cgs}\\
		=& 48\kappa\gamma^2\left(\frac{4\kappa L\epsilon_{cgs}}{\alpha^2}+\frac{2\zeta}{\alpha}+\frac{2\sigma^2}{\alpha^2}\right)+\frac{6}{\mu}\epsilon_{cgs}
		\end{split}
		\end{equation*}
		where the second inequality comes from Lemma \ref{lem:kappa} and the concavity of $f(\x,\cdot)$.
		According to the value of input parameters, we can get
		\begin{align*}
		\BE[\|\y_R-\y^*(\x_{R})\|^2]\leq \frac{2\kappa D_\cX^2}{k(k+1)(k+2)}.
		\end{align*}
		
		By the $L$-smoothness of $f$ and the optimality of $\y^*(\x_R)$, we have
		\begin{equation*}
		\begin{split}
		\BE[f(\x_R,\y_R)]\geq& \BE[f(\x_R,\y^*(\x_R))+\langle\nabla_\y f(\x_R,\y^*(\x_R)),\y_R-\y^*(\x_R)\rangle-\frac{L}{2} \|\y_R-\y^*(\x_{R})\|^2]\\
		\geq& \BE[f(\x_R,\y^*(\x_R))-\frac{L}{2} \|\y_R-\y^*(\x_{R})\|^2]\\
		\geq& \BE\left[f(\x_R,\y^*(\x_R))-\frac{\kappa L D_\cX^2}{k(k+1)(k+2)}\right]=\BE[f(\x_R,\y^*(\x_R))]-\epsilon.
		\end{split}
		\end{equation*}
	\end{proof}

	\subsection{Proof of Theorem \ref{thm:mpscgs}}
	\begin{proof}
		Similar to Eq.(\ref{eq:1}) in section A.2, for any $\tilde{\x}\in\cX$, we have
		$$f(\x_k,\y_k)\leq (1-\gamma_k) f(\x_{k-1},\y_k)+\gamma_k f(\tilde{\x},\y_k)+\gamma_k\nabla_\x f(\z_k,\y_k)^\top(\v_k-\tilde{\x})+\frac{\gamma_k^2L}{2}\|\v_k-\v_{k-1}\|^2. $$
		Let $\d_k = \nabla_\x f_{\cP_k}(\z_k,\y_k)$. Notice that the update of $\v_k$ and the stopping condition of CndG procedure ensures that
		
		\begin{equation*}
		\max_{\x\in\cX} \langle \d_k+\alpha_k(\v_k-\v_{k-1}), \v_k-\x \rangle\leq \zeta_k.
		\end{equation*} 
		Combining the previous two inequality, we can obtain
		\begin{equation} \label{eq:10}
		\begin{split}
		f(\x_k,\y_k)\leq& (1-\gamma_k) f(\x_{k-1},\y_k)+\gamma_k f(\tilde{\x},\y_k)+\gamma_k\d_k^\top(\v_k-\tilde{\x})+\frac{\gamma_k^2L}{2}\|\v_k-\v_{k-1}\|^2\\
		&+\gamma_k(\nabla_\x f(\z_k,\y_k)-\d_k)^\top(\v_k-\tilde{\x})\\
		\leq& (1-\gamma_k) f(\x_{k-1},\y_k)+\gamma_k f(\tilde{\x},\y_k)+\gamma_k\zeta_k-\gamma_k \alpha_k(\v_k-\v_{k-1})^\top(\v_k-\tilde{\x})\\
		&+\frac{\gamma_k^2L}{2}\|\v_k-\v_{k-1}\|^2+\gamma_k(\nabla_\x f(\z_k,\y_k)-\d_k)^\top(\v_k-\tilde{\x})\\
		=& (1-\gamma_k) f(\x_{k-1},\y_k)+\gamma_k f(\tilde{\x},\y_k)+\gamma_k\zeta_k+\frac{\gamma_k \alpha_k}{2}(\|\v_{k-1}-\tilde{\x}\|^2-\|\v_k-\tilde{\x}\|^2)\\
		&+\frac{\gamma_k}{2}\left((L\gamma_k- \alpha_k)\|\v_k-\v_{k-1}\|^2+2(\d_k-\nabla_\x f(\z_k,\y_k))^\top(\v_{k-1}-\v_k)\right)\\
		&+\gamma_k(\d_k-\nabla_\x f(\z_k,\y_k))^\top(\tilde{\x}-\v_{k-1})\\
		\end{split}
		\end{equation}
		
		Due to the fact $ \alpha_k\geq L\gamma_k$ and $-\|\a\|^2+2\a^\top\b\leq \|\b\|^2$, we have
		\begin{equation} \label{eq:11}
		(L\gamma_k- \alpha_k)\|\v_k-\v_{k-1}\|^2+2(\d_k-\nabla_\x f(\z_k,\y_k))^\top(\v_{k-1}-\v_k)\leq \frac{\|\d_k-\nabla_\x f(\z_k,\y_k)\|^2}{ \alpha_k-L\gamma_k}
		\end{equation}
		
		Combining Eq.(\ref{eq:10}) and Eq.(\ref{eq:11}), we get
		
		\begin{equation*}
		\begin{split}
		\BE[f(\x_k,\y_k)-f(\tilde{\x},\y_k)]\leq& (1-\gamma_k) \BE[f(\x_{k-1},\y_k)-f(\tilde{\x},\y_k)]+\gamma_k\zeta_k\\
		&+\frac{\gamma_k \alpha_k}{2}\BE[\|\v_{k-1}-\tilde{\x}\|^2-\|\v_k-\tilde{\x}\|^2]+\frac{\gamma_k\BE[\|\d_k-\nabla_\x f(\z_k,\y_k)\|^2]}{ 2(\alpha_k-L\gamma_k)}   
		\end{split}
		\end{equation*}
		
		Let $\delta_k = \d_k-\nabla_\x f(\z_k,\y_k)$, then $\BE[\|\delta_k\|^2]\leq\sigma_k^2$. Let $\Phi(k)=k(k+1)(k+2)\BE[f(\x_k,\y_k)-f(\tilde{\x},\y_k)]$. According to Lemma \ref{lem:sprox}, we have $\BE[f(\x_k,\y_k)-\max_{\y\in\cY}f(\x_k,\y)]\geq -\epsilon_k$.
		Thus, we can obtain
		{\small\begin{equation} \label{eq:19}
			\begin{split}
			\Phi(k)\leq& \Phi(k-1)+k(k-1)(k+1)\BE[f(\x_{k-1},\y_k)-f(\x_{k-1},\y_{k-1})-f(\tilde{\x},\y_k)+f(\tilde{\x},\y_{k-1})]\\
			&+k(k+1)(k+2)\gamma_k\left(\zeta_k+\frac{ \alpha_k}{2}\BE[\|\v_{k-1}-\tilde{\x}\|^2-\|\v_k-\tilde{\x}\|^2]+\frac{\sigma_k^2}{2( \alpha_k-L\gamma_k)}\right)\\
			\leq& \Phi(k-1)+k(k-1)(k+1)\BE[\epsilon_{k-1}-f(\tilde{\x},\y_k)+f(\tilde{\x},\y_{k-1})]\\
			&+k(k+1)(k+2)\gamma_k\left(\zeta_k+\frac{ \alpha_k}{2}\BE[\|\v_{k-1}-\tilde{\x}\|^2-\|\v_k-\tilde{\x}\|^2]+\frac{\sigma_k^2}{ 2(\alpha_k-L\gamma_k)}\right)\\
			\leq& \Phi(0)+\sum_{s=1}^k s(s-1)(s+1)\epsilon_{s-1}-\sum_{s=1}^k s(s-1)(s+1)\BE[f(\tilde{\x},\y_s)-f(\tilde{\x},\y_{s-1})]\\
			&+ \sum_{s=1}^k s(s+1)(s+2)\gamma_s\left(\zeta_s+\frac{ \alpha_s}{2}\BE[\|\v_{s-1}-\tilde{\x}\|^2-\|\v_s-\tilde{\x}\|^2]+\frac{\sigma_s^2}{ 2(\alpha_s-L\gamma_s)}\right)\\
			=& \sum_{s=1}^k s(s-1)(s+1)\epsilon_{s-1}+3\sum_{s=1}^{k-1} s(s+1)\BE[f(\tilde{\x},\y_s)]-k(k-1)(k+1)\BE[f(\tilde{\x},\y_k)]\\
			&+ \sum_{s=1}^k s(s+1)(s+2)\gamma_s\left(\zeta_s+\frac{ \alpha_s}{2}\BE[\|\v_{s-1}-\tilde{\x}\|^2-\|\v_s-\tilde{\x}\|^2]+\frac{\sigma_s^2}{ 2(\alpha_s-L\gamma_s)}\right)\\
			\end{split}
			\end{equation}}
		
		According to the parameter setting, we have 
		\begin{align} \label{eq:20}
		\sum_{s=1}^k s(s+1)(s+2)\gamma_s\zeta_{s}=\frac{1}{192}k LD_\cX^2, \quad \sum_{s=1}^k s(s+1)(s+2)\frac{\gamma_s\sigma_s^2}{ 2(\alpha_s-L\gamma_s)}\leq k LD_\cX^2
		\end{align}
		We also have
		\begin{equation} \label{eq:21}
		\begin{split}
		&\sum_{s=1}^k s(s+1)(s+2)\frac{\gamma_s \alpha_s}{2}(\|\v_{s-1}-\tilde{\x}\|^2-\|\v_s-\tilde{\x}\|^2)\\
		=& 9\kappa L\sum_{s=1}^k s(\|\v_{s-1}-\tilde{\x}\|^2-\|\v_s-\tilde{\x}\|^2)\\
		\leq& 9\kappa L\sum_{s=0}^{k-1} \|\v_s-\tilde{\x}\|^2\leq 9k\kappa LD_\cX^2.
		\end{split}
		\end{equation}
		Substituting  Eq.(\ref{eq:20}) and Eq.(\ref{eq:21}) into Eq.(\ref{eq:19}) and using the fact that $\Phi(0)=0$, we have
		$$ \Phi(k)\leq \sum_{s=1}^k s(s-1)(s+1)\epsilon_{s-1}+3\sum_{s=1}^{k-1} s(s+1)\BE[f(\tilde{\x},\y_s)]-k(k-1)(k+1)\BE[f(\tilde{\x},\y_k)]+ 11k\kappa LD_\cX^2.$$
		
		Then, for any $\tilde{\y}\in\cY$, we have 
		\begin{equation*}
		\begin{split}
		&\sum_{s=1}^k s(s-1)(s+1)\epsilon_{s-1} + 11k\kappa LD_\cX^2\\
		\geq& \Phi(k)-3\sum_{s=1}^{k-1} s(s+1)\BE[f(\tilde{\x},\y_s)]+k(k-1)(k+1)\BE[f(\tilde{\x},\y_k)]\\\
		=& k(k+1)(k+2)\BE[f(\x_k,\y_k)]-3\sum_{s=1}^{k} s(s+1)\BE[f(\tilde{\x},\y_s)]\\\
		\geq& k(k+1)(k+2)\BE[f(\x_k,\y_k)-f(\tilde{\x},\bar{\y}_k)]\\
		\geq& k(k+1)(k+2)\BE[f(\x_k,\tilde{\y})-f(\tilde{\x},\bar{\y}_k)-\epsilon_k]
		\end{split}
		\end{equation*}
		where the second to last inequality is by the concavity of $f(\x,\cdot)$ and the definition of $\bar{\y}_k$.
		Then, we can obtain the bound of the expectation of the primal-dual gap:
		\begin{equation*}
		\BE[f(\x_k,\tilde{\y})-f(\tilde{\x},\bar{\y}_k)]\leq \frac{1}{k(k+1)(k+2)}\left(\sum_{s=1}^k s(s+1)(s+2)\epsilon_s+11k\kappa LD_\cX^2  \right)\leq \frac{12\kappa LD_\cX^2}{(k+1)(k+2)}
		\end{equation*}
		where the last equation comes from the fact that $\epsilon_k=\frac{\kappa LD_\cX^2}{k(k+1)(k+2)}$.
		
		
	\end{proof}
	
	\subsection{Proof of Corollary \ref{col:mpscgs-f}}
	If the objective function is of finite-sum form, the performance of iSTORC is the same as STORC. According to Corollary 2 of ~\cite{hazan2016variance}, the iSTORC algorithm requires  $\cO\left((n+\kappa^2)\log\frac{ \kappa LD_\cY^2}{\epsilon_{k}}\right)$ IFO calls and $\cO\left(\frac{\kappa LD_\cY^2}{\epsilon_{k}}\right)$ LO calls. Thus, the IFO and LO complexity of MPCGS are respectively 
	$\cO\left(\sum_{k=1}^N(P_k+\sum_{r=1}^R (n+
	\kappa^2)\log\frac{\kappa LD_\cY^2}{\epsilon_k})\right) $ and
	$\cO\left(\sum_{k=1}^N\sum_{r=1}^R \left(\frac{\kappa LD_\cY^2}{\epsilon_k}+\frac{\beta_k D_\cX^2}{\zeta_k}\right)\right)$, where $R$ is the number of iterations of Stochastic-Prox-step.
	Theorem \ref{thm:mpscgs} implies that $N$ should be the order $\Theta\left(\sqrt{\frac{\kappa LD_\cX^2}{\epsilon}}\right)$. Plugging in all parameters obtains the complexity of MPSCGS.

	\subsection{Proof of Corollary \ref{col:mpscgs-e}}
	Suppose the objective function has expectation form. According to Corollary \ref{col:istorc}, the the iSTORC algorithm requires  $\cO\left((
	\sqrt{\kappa}/(L\epsilon_k)+\kappa^2)\log\frac{ \kappa LD_\cY^2}{\epsilon_{k}}\right)$ SFO calls and $\cO\left(\frac{\kappa LD_\cY^2}{\epsilon_{k}}\right)$ LO calls. The rest analysis follows the proof of Corollary \ref{col:mpscgs-f}.

\end{document}